\newcommand{\newsection}[1]{\setcounter{equation}{0} \section{#1}}
\newcommand{\bea}{\begin{eqnarray}}
\newcommand{\eea}{\end{eqnarray}}
\newcommand{\clb}{\mathcal{B}}
\newcommand{\cle}{\mathcal{E}}
\newcommand{\clh}{\mathcal{H}}
\newcommand{\clm}{\mathcal{M}}
\newcommand{\cls}{\mathcal{S}}
\newcommand{\clw}{\mathcal{W}}
\newcommand{\D}{\mathbb{D}}
\newcommand{\C}{\mathbb{C}}
\newcommand{\raro}{\rightarrow}
\def\textmatrix#1&#2\\#3&#4\\{\bigl({#1 \atop #3}\ {#2 \atop #4}\bigr)}
\def\dispmatrix#1&#2\\#3&#4\\{\left({#1 \atop #3}\ {#2 \atop #4}\right)}
\newcommand{\be}{\begin{equation}}
\newcommand{\ee}{\end{equation}}
\newcommand{\ben}{\begin{eqnarray*}}
\newcommand{\een}{\end{eqnarray*}}
\newcommand{\NI}{\noindent}
\newcommand{\bi}{\begin{itemize}}
\newcommand{\ei}{\end{itemize}}
\newcommand\vp{\varphi}
\newtheorem{Theorem}{\sc Theorem}[section]
\newtheorem{Lemma}[Theorem]{\sc Lemma}
\newtheorem{Proposition}[Theorem]{\sc Proposition}
\newtheorem{Corollary}[Theorem]{\sc Corollary}
\newtheorem{Example}[Theorem]{\sc Example}
\newtheorem{Remark}[Theorem]{\sc Remark}
\newtheorem{Note}[Theorem]{\sc Note}
\newtheorem{Question}{\sc Question}
\newtheorem{ass}[Theorem]{\sc Assumption}
\newtheorem{Definition}[Theorem]{\sc Definition}
\newcommand{\bt}{\begin{Theorem}}
\def\beginlem{\begin{Lemma}}
\def\beginprop{\begin{Proposition}}
\def\begincor{\begin{Corollary}}
\def\begindef{\begin{Definition}}
\def\beginexamp{\begin{Example}}
\def\beginrem{\begin{Remark}}
\def\beginq{\begin{Question}}
\def\beginass{\begin{ass}}
\def\beginnote{\begin{Note}}
\newcommand{\et}{\end{Theorem}}
\def\endlem{\end{Lemma}}
\def\endprop{\end{Proposition}}
\def\endcor{\end{Corollary}}
\def\enddef{\end{Definition}}
\def\endexamp{\end{Example}}
\def\endrem{\end{Remark}}
\def\endq{\end{Question}}
\def\endass{\end{ass}}
\def\endnote{\end{Note}}
\begin{document}

\title{Invariant subspaces of analytic perturbations}

\author[Das]{Susmita Das}
\address{Indian Statistical Institute, Statistics and Mathematics Unit, 8th Mile, Mysore Road, Bangalore, 560059,
India}
\email{susmita.das.puremath@gmail.com}

\author[Sarkar]{Jaydeb Sarkar}
\address{Indian Statistical Institute, Statistics and Mathematics Unit, 8th Mile, Mysore Road, Bangalore, 560059,
India}
\email{jay@isibang.ac.in, jaydeb@gmail.com}

%\today

\subjclass{47A55, 46E22, 47A15, 30H10, 30J05, 47B20}

\keywords{Perturbations, reproducing kernels, shift operators, invariant subspaces, inner functions, Toeplitz operators, commutants}

\begin{abstract}
By analytic perturbations, we refer to shifts that are finite rank perturbations of the form $M_z + F$, where $M_z$ is the unilateral shift and $F$ is a finite rank operator on the Hardy space over the open unit disc. Here shift refers to the multiplication operator $M_z$ on some analytic reproducing kernel Hilbert space. In this paper, we first isolate a natural class of finite rank operators for which the corresponding perturbations are analytic, and then we present a complete classification of invariant subspaces of those analytic perturbations. We also exhibit some instructive examples and point out several distinctive properties (like cyclicity, essential normality, hyponormality, etc.) of analytic perturbations.
\end{abstract}

\maketitle

\tableofcontents

\newsection{Introduction}\label{sec: intro}

Perturbation theory for linear operators is an old subject that studies spectral theory and the structural behavior of linear operators that are perturbed by small operators (see the classic \cite{Kato}). Broadly speaking, the main aim of perturbation theory is to study (and also compare the properties of)
\[
S = T + F,
\]
where $T$ is a tractable operator (like unitary, normal, isometry, self-adjoint, etc.) and $F$ is a finite rank (or compact, Hilbert–Schmidt, Schatten-von Neumann class, etc.) operator on some Hilbert space.

The theory of perturbed linear operators is far from complete and there are many open problems and untouched areas (cf. \cite{Clark, Fuhrmann, Nakamura, PY, Simon}). In this note, however, we propose an analytic approach to perturbation theory, namely, we study analytic perturbations of unilateral shift on the Hardy space $H^2(\D)$ over the open unit disc $\D$ in $\mathbb{C}$. More specifically, we deal with closed invariant subspaces of ``shift'' operators of the form
\[
S_n = M_z + F,
\]
where $M_z$ denotes the unilateral shift and $F$ is a finite rank operator (of rank $\leq n$) on $H^2(\D)$. We call a bounded linear operator $S$ acting on a Hilbert space a \textit{shift} if $S$ is unitarily equivalent to $M_z$ on some \textit{analytic Hilbert space}, where $M_z$ denote the multiplication operator by the coordinate function $z$. In this paper, analytic Hilbert spaces will refer to reproducing kernel Hilbert spaces of analytic functions on $\D$. The unilateral shift $M_z$ on $H^2(\D)$ is a natural example (which is also a model example of isometry) of shift.

Now the classification of invariant subspaces of the unilateral shift is completely known, thanks to the classical work of Beurling \cite{Beurling}: A nonzero closed subspace $\clm \subseteq H^2(\D)$ is invariant under $M_z$ if and only if there exists an inner function $\theta \in H^\infty(\D)$ such that
\[
\clm = \theta H^2(\D).
\]
We use the standard notation $H^\infty(\D)$ to denote the Banach algebra of all bounded analytic functions on $\D$.

In this paper, we first introduce a class of finite rank operators $F$ (we call them \textit{$n$-perturbations}) on $H^2(\D)$ for which the corresponding perturbations $S_n = M_z + F$ are shifts (we call them \textit{$n$-shifts}). Then we present a complete classification of $S_n$-invariant closed subspaces of $H^2(\D)$. Note again that $S_n$ is unitarily equivalent to the multiplication operator $M_z$ on some analytic Hilbert space.

Our central result (see Theorem \ref{thm: inv sub}) is the following invariant subspace theorem (see Definition \ref{def: n shift} for the formal definition of $n$-shifts): Let $S_n = M_z + F$ on $H^2(\D)$ be an $n$-shift, and let $\clm$ be a nonzero closed subspace of $H^2(\D)$. Then $\clm$ is invariant under $S_n$ if and only if there exist an inner function $\theta \in H^\infty(\D)$ and polynomials $\{p_i, q_i\}_{i=0}^{n-1} \subseteq \C[z]$ such that
\[\clm = (\mathbb{C}\vp_0 \oplus \mathbb{C}\vp_1 \oplus \cdots \oplus \mathbb{C}\vp_{n-1}) \oplus z^n\theta H^2(\D),\]
where $\vp_i = z^ip_i \theta - q_i$ for all $i=0, \ldots, n-1$, and
\[
S_n\vp_j \in (\mathbb{C}\vp_{j+1} \oplus \cdots \oplus \mathbb{C}\vp_{n-1})\oplus z^n\theta H^2(\D),
\]
for all $j=0, \ldots, n-2$, and $S_n\vp_{n-1} = z^n p_{n-1} \theta$.

The above classification is based on a result of independent interest (see Theorem \ref{prop: dimension 1}): If $\clm$ is a nonzero closed $S_n$-invariant subspace of $H^2(\D)$, then
\[
\text{dim} (\clm \ominus S_n \clm) = 1.
\]
Clearly, this is a Burling-type property of $S_n$-invariant subspaces.

We remark that a priori examples of $n$-shifts may seem counter-intuitive because of the intricate structure of perturbed of linear operators. Subsequently, we put special emphasis on natural examples of $n$-shifts, and as interesting as it may seem, analytic spaces corresponding to (truncated) tridiagonal kernels or band kernels with bandwidth $1$ give several natural examples of $n$-shifts. In the special case when $S_n$ is unitarily equivalent to a shift on an analytic space corresponding to a band truncated kernel with bandwidth $1$, we prove that the invariant subspaces of $S_n$ are also hyperinvariant. Our proof of this fact follows a classical route: computation of commutants of shifts. In general, it is a difficult problem to compute the commutant of a shift (even for weighted shifts). However, in our band truncated kernel case, we are able to explicitly compute the commutant of $n$-shifts:
\[
\{S_n\}' = \{T_{\vp} + N: \vp \in H^\infty(\D), \text{rank} N \leq n\},
\]
where $T_{\vp}$ denotes the analytic Toeplitz operator with symbol $\vp \in H^\infty(\D)$, and $N$ admits an explicit (and restricted) representation (cf. \eqref{eqn: matrix M1}). We also present concrete examples of $1$-shifts on tridiagonal kernel spaces with special emphasis on cyclicity of invariant subspaces. For instance, a simple example of $S_1$-shift in Section \ref{sec: examples} brings out the following distinctive properties:
\begin{enumerate}
\item $[S_1^*, S_1] := S_1^* S_1 - S_1 S_1^*$ is of finite rank (in particular, $S_1$ is essentially normal).
\item $S_1$ is not subnormal (and, more curiously, not even hyponormal).
\item Invariant subspaces of $S_1$ are cyclic.
\end{enumerate}

We believe that these observations along with the classification of invariant subspaces of shifts on tridiagonal spaces (a particular case of Theorem \ref{thm: inv sub}) are of independent interest beside their application to the theory of perturbed operators. Finally, we remark that perturbations of concrete operators (with some analytic flavor) have been also studied in different contexts by other authors. For instance, see \cite{ALP, Fuhrmann, Nakamura, AP, PY, Serban}, and notably Clark \cite{Clark}.

The rest of this paper is organized as follows: In Section \ref{sec: n shift} we formally introduce $n$-perturbations and $n$-shifts, and collect all the necessary preliminaries about $n$-shifts. Section \ref{sec: main thm} deals with the invariant subspace theorem of $n$-shifts.

\NI In Section \ref{sec: commutant}, we restrict our study to $n$-shifts on truncated tridiagonal spaces. We remark that shifts on tridiagonal spaces are the ``next best'' examples of shifts after the weighted shifts. In this case, we completely parameterize the commutants of $n$-shifts. In particular, we prove that the multiplier space of a truncated tridiagonal space is precisely $H^\infty(\D)$.

\NI In Section \ref{sec: Hyper}, we use the structure of commutants of shifts on truncated tridiagonal spaces to prove that the invariant subspaces of $n$-shifts are actually hyperinvariant. The final section, Section \ref{sec: examples}, is devoted to instructive examples. Here we illustrate the main result, Theorem \ref{thm: inv sub}, with some concrete examples, and present a classification of cyclic invariant subspaces of $1$-shifts.

In this paper, all Hilbert spaces will be separable and over $\C$. Given a Hilbert space $\clh$, $\clb(\clh)$ will denote the algebra of all bounded linear operators on $\clh$. Throughout this paper, $n$ will be an arbitrary natural number.

\newsection{$n$-shifts}\label{sec: n shift}

This section introduces the central concept of this paper, namely, analytic perturbations or $n$-shifts. We also explore some basic properties of $n$-shifts.  

We begin with a concise discussion of shift operators. Briefly speaking, a shift operator is the multiplication operator $M_z$ by the coordinate function $z$ on some Hilbert space of analytic functions on a domain in $\C$. More specifically, given a Hilbert space $\cle$, a function $k : \D \times \D \raro \clb(\cle)$ is called \textit{positive definite} or a \textit{kernel} \cite{Aronszajn} if
\begin{equation}\label{eqn: kernel vec}
\sum_{i,j=1}^m \langle k(z_i, z_j) \eta_j, \eta_i \rangle_{\cle} \geq 0,
\end{equation}
for all $\{z_1, \ldots, z_m\} \subseteq \D$, $\{\eta_1, \ldots, \eta_m\} \subseteq \cle$ and $m \geq 1$. A kernel $k$ is called \textit{analytic} if $k$ is analytic in the first variable. As is well known, if $k$ is an analytic kernel, then there exists a Hilbert space $\clh_k$, which we call \textit{analytic Hilbert space}, of $\cle$-valued analytic functions on $\D$ such that $\{k(\cdot, w) \eta: w \in \D, \eta \in \cle\}$ is a total set in $\clh_k$ with the \textit{reproducing property}
\[
\langle f(w), \eta \rangle_{\cle} = \langle f, k(\cdot, w) \eta \rangle_{\clh_k},
\]
for all $f \in \clh_k$, $w \in \D$, and $\eta \in \cle$. We now present the formal definition of shift operators:
\begin{Definition}
The shift on $\clh_k$ is the multiplication operator $M_z$ defined by $(M_z f)(w) = w f(w)$ for all $f \in \clh_k$ and $w \in \D$.
\end{Definition}

In what follows, we will be mostly concerned with bounded shifts. Therefore, we always assume that $M_z$ is bounded. Note that, in the scalar-valued case, that is, when $\cle = \C$, the positivity condition in \eqref{eqn: kernel vec} becomes
\[
\sum_{i,j=1}^m \bar{c}_i c_j k(z_i, z_j) \geq 0,
\]
for all $\{z_1, \ldots, z_m\} \subseteq \D$, $\{\eta_1, \ldots, \eta_m\} \subseteq \cle$ and $m \geq 1$. The simplest example of an analytic kernel is the \textit{Szeg\"{o} kernel} $\mathbb{S}$ on $\D$, where
\[
\mathbb{S}(z, w) = (1 - z \bar{w})^{-1} \qquad (z, w \in \D).
\]
The analytic space corresponding to the Szeg\"{o} kernel is the well-known (scalar-valued) Hardy space $H^2(\D)$, where the shift $M_z$ on $H^2(\D)$ is known as the \textit{unilateral shift} (of multiplicity one). Also, recall that the unilateral shift $M_z$ on $H^2(\D)$ is the model operator for contractions on Hilbert spaces (in the sense of basic building blocks \cite{Clark}).

We also record the key terms of the agreement: $X_1 \in \clb(\clh_1)$ and $X_2 \in \clb(\clh_2)$ are the same means there exists a unitary $U: \clh_1 \raro \clh_2$ such that $UX_1 = X_2 U$, that is, $X_1$ and $X_2$ are unitarily equivalent. Therefore, $X \in \clb(\clh)$ is a shift if there exists an analytic Hilbert space $\clh_k$ such that the shift $M_z$ on $\clh_k$ and $X$ are unitarily equivalent. Finally, we are ready to introduce the central objects of this paper:

\begin{Definition}[$n$-shifts]\label{def: n shift}
A linear operator $F$ on $H^2(\D)$ is called an $n$-perturbation if

(i) $Fz^m = 0$ for all $m \geq n$,

(ii) $F (z^m H^2(\D)) \subseteq z^{m+1} \C[z]$ for all $m \geq 0$, and

(iii) $M_z+F$ is left-invertible.

\NI We call $S_n = M_z + F$ the $n$-shift corresponding to the $n$-perturbation $F$ (or simply $n$-shift if $F$ is clear from the context).
\end{Definition}

Since $\text{ran} F \subseteq \text{span} \{1, z, \ldots, z^{n-1}\}$, it follows that an $n$-perturbation is of rank $m$ for some $m \leq n$. In fact, it is easy to see that the rank of the $2$-perturbation
\[
Fz^m = \begin{cases}
z^2 & \mbox{if } m =0,1  \\
0 & \mbox{otherwise},
\end{cases}
\]
is precisely $1$. Moreover, $S_2 = M_z + F$ is a $2$-shift. Indeed, since $S_2^* S_2 = \begin{bmatrix} 2 & 2 \\ 2 & 4 \end{bmatrix} \oplus I_{z^2 H^2(\D)}$ on $H^2(\D) = \C \oplus \C z \oplus z^2 H^2(\D)$, it follows that $S_2^* S_2$ is invertible, and hence $S_2$ is left-invertible. Now we justify Definition \ref{def: n shift} by showing that an $n$-shift is indeed a shift.

\begin{Lemma}\label{lemma: S_n is shift}
Let $F$ be an $n$-perturbation. If $S_n = M_z + F$, then:

(i) $F(z^m f) = 0$ for each $m \geq n$ and $f \in H^2(\D)$.

(ii) For each $f \in H^2(\D)$ and $m \geq 1$, there exists $p \in \C[z]$, depending on both $f$ and $m$, such that
\[
S_n^mf = z^m (f + p).
\]

(iii) $S_n$ is a shift on some analytic Hilbert space.
\end{Lemma}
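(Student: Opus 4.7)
My plan is to dispatch parts (i) and (ii) by straightforward use of the defining conditions and induction, and then bootstrap these into (iii) via a standard model-theoretic characterization of shifts on analytic reproducing kernel Hilbert spaces.

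For part (i), the idea is to promote condition (i) of Definition \ref{def: n shift} from monomials to arbitrary elements of $z^n H^2(\D)$ by linearity and continuity. Specifically, since $F$ is a bounded operator (being the difference $S_n - M_z$ of two bounded operators), and since $F z^k = 0$ for all $k \geq n$, linearity gives $F g = 0$ for every polynomial $g \in z^n \C[z]$; density of $z^n \C[z]$ in $z^n H^2(\D)$ then forces $F|_{z^n H^2(\D)} = 0$. For any $m \geq n$ and $f \in H^2(\D)$, we have $z^m f = z^n (z^{m-n} f) \in z^n H^2(\D)$, so $F(z^m f) = 0$.

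For part (ii), I will induct on $m$. The base case $m = 1$ follows directly from condition (ii) of Definition \ref{def: n shift} with $m = 0$: we get $Ff \in z\C[z]$, so $Ff = zp$ for some $p \in \C[z]$, and thus $S_n f = zf + Ff = z(f+p)$. For the inductive step, assume $S_n^m f = z^m(f + p)$ for some $p \in \C[z]$. Then
\[
S_n^{m+1} f = S_n(z^m(f+p)) = z^{m+1}(f+p) + F(z^m(f+p)),
\]
and condition (ii) gives $F(z^m(f+p)) \in z^{m+1}\C[z]$, say $F(z^m(f+p)) = z^{m+1} q$. Setting the new polynomial to be $p + q$ completes the induction.

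For part (iii), the key observation from (ii) is that $S_n^m H^2(\D) \subseteq z^m H^2(\D)$ for all $m \geq 1$, so
\[
\bigcap_{m \geq 1} S_n^m H^2(\D) \subseteq \bigcap_{m \geq 1} z^m H^2(\D) = \{0\},
\]
i.e., $S_n$ is an analytic operator in the sense of Shimorin. Combined with the hypothesis that $S_n$ is left-invertible, this places $S_n$ in the setting of Shimorin's analytic model (or equivalently the classical wandering-subspace construction for left-invertible analytic operators): one forms the wandering subspace $\clw = \ker S_n^*$, and the map sending $h \in H^2(\D)$ to the $\clw$-valued analytic function $z \mapsto \sum_{m \geq 0} (P_\clw L^m h)\, z^m$, where $L = (S_n^* S_n)^{-1} S_n^*$ is the canonical left inverse, unitarily realizes $S_n$ as $M_z$ on an analytic Hilbert space $\clh_k$. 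The analyticity property $\bigcap_m S_n^m H^2(\D) = \{0\}$ is precisely what guarantees injectivity of this map. The only subtle step is invoking this model theorem; parts (i) and (ii) are routine, and the main work is simply checking the analyticity hypothesis, which we already have in hand.
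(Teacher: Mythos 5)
Your proposal is correct and follows essentially the same route as the paper: parts (i) and (ii) are handled by linearity/density and induction exactly as there, and part (iii) by deducing $\bigcap_{m \geq 0} S_n^m H^2(\D) \subseteq \bigcap_{m \geq 0} z^m H^2(\D) = \{0\}$ from (ii) and then invoking Shimorin's model theorem (the paper's citation \cite{SS}) for left-invertible analytic operators with wandering subspace $\clw = \ker S_n^*$. Your added details (the explicit density argument in (i) and the model map $h \mapsto \sum_{m \geq 0}(P_{\clw} L^m h) z^m$ with $L = (S_n^* S_n)^{-1} S_n^*$) merely make explicit what the paper leaves implicit.
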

\begin{proof}
Part (i) immediately follows from the fact that $F(z^m p) = 0$ for all $p \in \C[z]$. Since by assumption $F (z^m H^2(\D)) \subseteq z^{m+1} \C[z]$, $m \geq 0$, for each $f \in H^2(\D)$, there exists a polynomial $p_f \in \C[z]$ such that $F f = z p_f$. Then
\[
S_n f = (M_z + F)f = zf + zp_f = z(f+p_f),
\]
and hence, there exists $q_f \in \C[z]$ such that
\[
S_n^2f = (M_z + F) (z(f + p_f)) = z^2(f + p_f) + z^2 q_f = z^2(f + p_f + q_f).
\]
The second assertion now follows by the principle of mathematical induction. To prove the last assertion, we use (ii) to conclude that
\begin{equation}\label{eqn:Sm H subset zm H}
S_n^m H^2(\D) \subseteq z^m H^2(\D) \qquad (m \geq 0).
\end{equation}
Since we know that $M_z$ on $H^2(\D)$ is pure, that is, $\cap_{m \geq 0} z^m H^2(\D) = \{0\}$, the above inclusion implies that
\[
\cap_{m \geq 0} S_n^m H^2(\D) \subseteq \cap_{m \geq 0} z^m H^2(\D) = \{0\}.
\]
Using this and the left invertibility of $S_n$, it follows that $S_n$ on $H^2(\D)$ is a shift.
\end{proof}

Note that the following standard fact \cite{SS} has been used in the above proof: If $T \in \clb(\clh)$ is a left-invertible operator and if $\cap_{m=0}^\infty T^m \clh = \{0\}$, then $T$ is unitarily equivalent to the shift $M_z$ on some $\clw$-valued analytic Hilbert space, where $\clw = \clh \ominus T \clh$. In the present case, if
\[
\clw = \ker S_n^* = \ker (M_z + F)^*,
\]
then $S_n$ on $H^2(\D)$ is unitarily equivalent to $M_z$ on some $\clw$-valued analytic Hilbert space $\clh_k$ over $\D$. Here the kernel function $k$ is explicit \cite[ Corollary 2.14]{SS} and involves a specific left inverse of $S_n$ (namely, $(S_n^*S_n)^{-1}S_n^*$), but we will not need this.

Let $T$ be a bounded linear operator on a Hilbert space $\clh$. Given a vector $f \in \clh$, let $[f]_{T}$ denote the $T$-cyclic closed subspace generated by $f$, that is
\[
[f]_T = \text{clos } \{p(T) f: p \in \C[z]\}.
\]

\begin{Lemma}\label{lemma: [f] cont shift}
If $f \in H^2(\D)$ is a nonzero function, then $[f]_{S_n}$ contains a nontrivial closed $M_z$-invariant subspace of $H^2(\D)$.
\end{Lemma}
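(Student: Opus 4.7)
The plan is to exhibit an explicit nonzero element $g \in [f]_{S_n}$ on which $S_n$ acts exactly like $M_z$, so that the $S_n$-cyclic subspace generated by $g$ coincides with the $M_z$-cyclic subspace generated by $g$. The candidate is $g = S_n^n f$.

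First I would observe, using Lemma \ref{lemma: S_n is shift}(i), that $F$ annihilates every element of $z^n H^2(\D)$. Consequently $S_n h = M_z h$ for every $h \in z^n H^2(\D)$, and in particular $S_n^k g = z^k g$ for all $k \geq 0$, since by \eqref{eqn:Sm H subset zm H} we have $g = S_n^n f \in z^n H^2(\D)$. This identity is the heart of the argument: it forces
\[
p(S_n) g = p(z) g \qquad (p \in \C[z]),
\]
and hence, after taking closures, $[g]_{S_n} = [g]_{M_z}$.

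Next I would verify that $g \neq 0$. Since $S_n$ is left-invertible (Definition \ref{def: n shift}), it is injective, so $S_n^n f \neq 0$ whenever $f \neq 0$. Thus $[g]_{M_z}$ is a nonzero closed $M_z$-invariant subspace of $H^2(\D)$. Moreover $g \in z^n H^2(\D)$, whence $[g]_{M_z} \subseteq z^n H^2(\D)$, so this invariant subspace is proper. Finally, because $[f]_{S_n}$ is closed and $S_n$-invariant and contains $g = S_n^n f$, we get $[g]_{S_n} \subseteq [f]_{S_n}$, and therefore
\[
[g]_{M_z} = [g]_{S_n} \subseteq [f]_{S_n},
\]
giving the desired nontrivial $M_z$-invariant subspace inside $[f]_{S_n}$.

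There is no real obstacle here beyond correctly pinning down the action of $S_n$ on the tail $z^n H^2(\D)$; the only subtlety is recognising that conditions (i)--(ii) of Definition \ref{def: n shift}, as used in Lemma \ref{lemma: S_n is shift}, make $S_n$ \emph{coincide} with $M_z$ on $z^n H^2(\D)$ rather than merely mapping it into itself. Once that reduction is made, the identification $[g]_{S_n} = [g]_{M_z}$ is immediate and the rest is bookkeeping.
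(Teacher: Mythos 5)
Your proposal is correct and takes essentially the same route as the paper: both arguments rest on the fact that $F$ annihilates $z^nH^2(\D)$ (Lemma \ref{lemma: S_n is shift}), so that $S_n$ acts as $M_z$ once a vector has been pushed into $z^nH^2(\D)$, and both exhibit $[S_n^n f]_{M_z}$ as the desired subspace of $[f]_{S_n}$ --- the paper encodes this in the identity $S_n^m = M_z^{m-n}S_n^n$ for $m \geq n+1$, while you encode it as $S_n|_{z^nH^2(\D)} = M_z|_{z^nH^2(\D)}$. Your explicit checks that $S_n^n f \neq 0$ (injectivity from left-invertibility) and that the subspace is proper (being contained in $z^nH^2(\D)$) are correct bookkeeping that the paper leaves implicit.
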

\begin{proof}
Suppose $g \in H^2(\D)$. By part (ii) of Lemma \ref{lemma: S_n is shift}, we already know that $S_n^ng = z^n(g + p)$ for some $p \in \C[z]$. Then part (i) of the same lemma implies that
\[
S_n^{n+1}g = (M_z + F) (z^n g + z^n p) = M_z(z^{n} g + z^n p) = M_z(S_n^n g).
\]
Then, by induction, we have $S_n^m g = M_z^{m-n} (S_n^n g)$, and hence
\begin{equation}\label{eqn: Smf= Mzlm}
S_n^m = M_z^{m-n} S_n^n  \qquad (m \geq n+1).
\end{equation}
In particular, if $f$ is nonzero in $H^2(\D)$, then $[S_n^n f]_{M_z}$ is an $M_z$-invariant closed subspace of $[f]_{S_n}$.
\end{proof}

In the context of the equality \eqref{eqn: Smf= Mzlm}, note in general that
\[
[M_z^{m-n}, S_n^n] = M_z^{m-n} S_n^n - S_n^n M_z^{m-n} \neq 0 \qquad (m \geq n+1).
\]

\newsection{Invariant subspaces}\label{sec: main thm}

In this section, we will prove the central result of this paper: a complete classification of $n$-shift invariant closed subspaces of $H^2(\D)$. However, as a first step, we need to prove a Beurling type property of invariant subspaces of $n$-shifts. We recall that if $\cls$ is a nonzero closed $M_z$-invariant subspace of $H^2(\D)$, then
\[
\text{dim } (\cls \ominus z \cls) = 1.
\]
This is an easy consequence of the Beurling theorem (or, one way to prove the Beurling theorem). In the following, we prove a similar result for $S_n$-invariant closed subspaces of $H^2(\D)$.

\begin{Theorem}\label{prop: dimension 1}
If $\clm \subseteq H^2(\D)$ is a nonzero closed $S_n$-invariant subspace, then
\[
\text{dim}(\clm \ominus S_n \clm) = 1.
\]
\end{Theorem}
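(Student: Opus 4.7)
The plan is to reduce to the classical Beurling theorem by iterating $S_n$ enough times to annihilate the perturbation $F$. Since $S_n$ is left-invertible (hence bounded below), each $S_n^k \clm$ is a closed subspace of $\clm$, producing a descending chain
\[
\clm \supseteq S_n \clm \supseteq \cdots \supseteq S_n^n \clm \supseteq S_n^{n+1} \clm.
\]
The identity $S_n^{n+1} = M_z S_n^n$ from \eqref{eqn: Smf= Mzlm}, combined with the $S_n$-invariance of $\clm$, yields
\[
M_z(S_n^n\clm) = S_n^{n+1}\clm = S_n^n(S_n\clm) \subseteq S_n^n\clm,
\]
so $\cln := S_n^n\clm$ is an $M_z$-invariant closed subspace of $z^n H^2(\D)$, the latter inclusion coming from \eqref{eqn:Sm H subset zm H}. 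Beurling's theorem then produces an inner function $\eta$ with $\cln = z^n\eta H^2(\D)$, and hence $S_n^{n+1}\clm = M_z\cln = z^{n+1}\eta H^2(\D)$, giving the base case
\[
\dim\bigl(S_n^n\clm \ominus S_n^{n+1}\clm\bigr) = 1.
\]

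Next I would propagate this codimension-one property down the chain using the injectivity of $S_n$. For each $0 \le k \le n$, $S_n$ induces a well-defined linear map
\[
\bar S_n : S_n^k\clm / S_n^{k+1}\clm \longrightarrow S_n^{k+1}\clm / S_n^{k+2}\clm, \qquad x + S_n^{k+1}\clm \longmapsto S_n x + S_n^{k+2}\clm,
\]
which is injective, since $S_n x \in S_n^{k+2}\clm = S_n(S_n^{k+1}\clm)$ forces $x \in S_n^{k+1}\clm$ by injectivity of $S_n$. Thus the quotient dimensions are nondecreasing in $k$, and since each $S_n^{k+1}\clm$ is closed in $S_n^k\clm$, they coincide with the orthogonal-complement dimensions $\dim(S_n^k\clm \ominus S_n^{k+1}\clm)$. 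Combined with the base case, this yields $\dim(\clm \ominus S_n\clm) \le 1$.

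Finally, for the lower bound, if $\clm = S_n\clm$ then iteration gives $\clm = S_n^k\clm \subseteq z^k H^2(\D)$ for every $k \ge 0$, and pureness of $M_z$ forces $\clm = \{0\}$, contradicting the hypothesis. The main conceptual obstacle is that $\clm$ itself admits no direct Beurling-type description with respect to $S_n$, because $S_n$ is not a pure isometry and no inner--outer factorization is available at its level. The key device is precisely that raising $S_n$ to the $n$-th power kills the perturbation $F$ on the relevant range, placing us inside the classical Beurling picture; the injectivity of $S_n$ then carries the codimension information back down through the chain.
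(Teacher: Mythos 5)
Your proof is correct, but it takes a genuinely different route from the paper's. The paper, after ruling out $\clm \ominus S_n \clm = \{0\}$ exactly as you do (purity of $M_z$ plus left-invertibility), proves $\dim(\clm \ominus S_n \clm) \leq 1$ by contradiction: it takes two orthogonal unit vectors $f, g \in \clm \ominus S_n \clm$, shows $[f]_{S_n} \cap [g]_{S_n} = \{0\}$ by an iterative limiting argument that forces all coefficients of the approximating polynomials to vanish, and then contradicts this via Lemma \ref{lemma: [f] cont shift} and Beurling: each cyclic subspace contains some $\theta_i H^2(\D)$, and $\theta_1 \theta_2$ is a nonzero element of the intersection. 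You instead invoke Beurling once, at level $n$ where the perturbation has been annihilated --- $S_n^n \clm$ is closed, nonzero by injectivity of $S_n$ (worth stating explicitly, since Beurling requires it), $M_z$-invariant via \eqref{eqn: Smf= Mzlm}, and contained in $z^n H^2(\D)$ by \eqref{eqn:Sm H subset zm H}, so $S_n^n \clm = z^n \eta H^2(\D)$ --- and then transport the codimension-one conclusion back down the chain through the injectivity of the induced maps on the quotients $S_n^k \clm / S_n^{k+1}\clm$. This monotonicity-of-deficiency argument is purely algebraic and avoids the paper's delicate coefficient-limit argument entirely, which is the trickiest part of the paper's proof; as a bonus, it pre-establishes the identity $S_n^n \clm = z^n \theta H^2(\D)$ of \eqref{eqn:SnM = zn theta H2D}, which the paper re-derives inside the proof of Theorem \ref{thm: inv sub}, so your route would streamline that argument as well. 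What the paper's approach buys in exchange is the cyclic-subspace machinery (orthogonal wandering vectors generating cyclic subspaces that must intersect through Beurling subspaces), which foreshadows the cyclicity analysis of Section \ref{sec: examples}; your argument is shorter and visibly generalizes to any left-invertible operator satisfying an eventual intertwining relation of the form $T^m = M_z^{m-n} T^n$.
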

\begin{proof}
Suppose if possible that $\clm \ominus S_n \clm = \{0\}$. Since $S_n$ is left-invertible, it follows that
\[
S_n^m \clm = \clm \qquad (m \geq 1),
\]
which implies that
\[
\clm = \cap_{m\geq 1} S_n^m \clm \subseteq \cap_{m\geq 1} S_n^m H^2(\D) \subseteq \cap_{m\geq 1} z^m H^2(\D) = \{0\},
\]
where the second inclusion follows from \eqref{eqn:Sm H subset zm H}. This contradiction shows that $\clm \ominus S_n \clm \neq \{0\}$. Now suppose that $f, g \in \clm \ominus S_n \clm$ be unit vectors. If possible, assume that $f$ and $g$ are orthogonal, that is, $\langle f, g \rangle = 0$. We claim that
\[
[f]_{S_n} \cap [g]_{S_n} = \{0\}.
\]
To prove this, first we pick a nonzero vector $h \in [f]_{S_n} \cap [g]_{S_n}$. Then there exist sequences of polynomials $\{p_m\}_{m \geq 1}$ and $\{q_m\}_{m\geq 1}$ such that
\begin{equation}\label{eqn: h = lim}
h = \lim_{m \raro \infty} (p_m(S_n) f) = \lim_{m \raro \infty} (q_m(S_n) g).
\end{equation}
For each $m \geq 1$, we let
\[
p_m(z) = \alpha_{m,0} + \alpha_{m,1} z + \cdots + \alpha_{m, t_m} z^{t_m},
\]
and
\[
q_m(z) = \beta_{m, 0} + \beta_{m, 1} z + \cdots + \beta_{m, l_m} z^{l_m},
\]
where $t_m$ and $l_m$ are in $\mathbb{N}$ and $m \geq 1$. Now $S_n^l g \in S_n \clm$ for all $l \geq 1$, together with $\langle g, f \rangle = 0$ implies that $\langle q_m(S_n) g, f \rangle = 0$ for all $m \geq 1$. Therefore
\[
\langle h, f \rangle = \langle \lim_{m \raro \infty} p_m(S_n) f, f \rangle = \langle  \lim_{m \raro \infty} q_m(S_n) g, f \rangle = \lim_{m \raro \infty} \langle q_m(S_n) g, f \rangle = 0,
\]
that is, $\langle h, f \rangle = 0$, where, on the other hand
\[
\langle h, f \rangle = \langle \lim_{m \raro \infty} p_m(S_n) f, f \rangle = \lim_{m \raro \infty} \langle p_m(S_n) f, f \rangle = \lim_{m \raro \infty} \langle \alpha_{m,0} f, f \rangle,
\]
as $S_n^l f \in S_n \clm$ for all $l \geq 1$, and $f \perp S_n \clm$. We immediately deduce that
\[
\lim_{m \raro \infty} \alpha_{m,0} = 0.
\]
Thus we obtain
\[
h= \lim_{m \raro \infty} ((\alpha_{m,1} S_n + \cdots + \alpha_{m, t_m} S_n^{t_m})f).
\]
Since $\langle S_n^k g , g \rangle = 0$ and $\langle S_n^l f, g \rangle = 0 $ for all $ k, l \geq 1$, repeating the same argument as above, we have $\langle h, g \rangle = 0$ and
\[
\lim_{m\raro \infty} \beta_{m,0} = 0,
\]
and consequently
\[
h= \lim_{m\raro \infty} ((\beta_{m,1} S_n + \cdots + \beta_{m, l_m} S_n^{l_m})g).
\]
Thus we obtain
\[
\lim_{m \raro \infty} ((\alpha_{m,1} S_n + \cdots + \alpha_{m, t_m} S_n^{t_m})f) = \lim_{m \raro \infty} ((\beta_{m,1} S_n + \cdots + \beta_{m, l_m} S_n^{l_m})g).
\]
Multiplying both sides by a left inverse of $S_n$ (for instance, $(S_n^* S_n)^{-1} S_n^*$ is a left inverse of $S_n$ \cite{SS}) then gives
\[
\begin{split}
h_1:&  = \lim_{m \raro \infty} ((\alpha_{m,1} + \alpha_{m,2} S_n + \cdots + \alpha_{m, t_m} S_n^{t_m-1})f)
\\
& = \lim_{m\raro \infty} ((\beta_{m,1} + \beta_{m,2} S_n + \cdots + \beta_{m, l_m} S_n^{l_m-1})g).
\end{split}
\]
We are now in exactly the same situation as in \eqref{eqn: h = lim}. Proceeding as above, we then have
\[
\lim_{m\raro \infty} \alpha_{m,1} = \lim_{m\raro \infty} \beta_{m,1} = 0.
\]
Arguing similarly, it will follow by induction that
\[
\lim_{m\raro \infty} \alpha_{m,t} = \lim_{m\raro \infty} \beta_{m,l} = 0.
\]
for all $t = 0, 1, \ldots, t_m$, and  $l = 0, 1, \ldots, l_m$, and $m \geq 1$, and so $h=0$. This contradiction shows that
\[
[f]_{S_n}\cap[g]_{S_n}=\{0\}.
\]
Now by Lemma \ref{lemma: [f] cont shift} and the classical Beurling theorem, we know that $\theta_1 H^2(\D) \subseteq [f]_{S_n}$ and $\theta_2 H^2(\D) \subseteq [g]_{S_n}$ for some inner functions $\theta_1$ and $\theta_2$ in $H^\infty(\D)$. Since
\[
\theta_1 \theta_2 \in \theta_1 H^2(\D) \cap \theta_2 H^2(\D) \subseteq [f]_{S_n}\cap[g]_{S_n},
\]
it follows that $\theta_1 H^2(\D) \cap \theta_2 H^2(\D) \neq \{0\}$, which contradicts the fact that $[f]_{S_n}\cap[g]_{S_n}=\{0\}$. Therefore, $\text{dim} (\clm \ominus S_n \clm) = 1$, and completes the proof of the theorem.
\end{proof}

Note that the final part of the above proof uses the classical Beurling theorem  (see the first part of Section \ref{sec: intro}): If $\clm$ is a nonzero $M_z$-invariant closed subspace of $H^2(\D)$, then there exists an inner function $\theta \in H^\infty(\D)$ such that $\clm = [\theta]_{M_z}$. We will return to the issue of cyclic invariant subspaces of $1$-shifts in Section \ref{sec: examples}, and here we proceed to state and prove our general invariant subspace theorem.

\begin{Theorem}\label{thm: inv sub}
Let $F$ be an $n$-perturbation on $H^2(\D)$, and let $\clm$ be a nonzero closed subspace of $H^2(\D)$. Then $\clm$ is invariant under $S_n = M_z + F$ if and only if there exist an inner function $\theta \in H^\infty(\D)$ and polynomials $\{p_i, q_i\}_{i=0}^{n-1} \subseteq \C[z]$ such that
\[\clm = (\mathbb{C}\vp_0 \oplus \mathbb{C}\vp_1 \oplus \cdots \oplus \mathbb{C}\vp_{n-1}) \oplus z^n\theta H^2(\D),\]
where $\vp_i = z^ip_i \theta - q_i$ for all $i=0, \ldots, n-1$, and
\[
S_n\vp_j \in (\mathbb{C}\vp_{j+1} \oplus \cdots \oplus \mathbb{C}\vp_{n-1})\oplus z^n\theta H^2(\D),
\]
for all $j=0, \ldots, n-2$, and $S_n\vp_{n-1} = z^n p_{n-1} \theta$.
\end{Theorem}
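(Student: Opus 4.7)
The backward direction is a direct verification: every element of $z^n\theta H^2(\D)$ lies in $z^n H^2(\D)$, where $F$ vanishes by Definition \ref{def: n shift}(i), so $S_n(z^n\theta H^2(\D))=z^{n+1}\theta H^2(\D)\subseteq z^n\theta H^2(\D)$; combined with $S_n\varphi_j\in\mathcal{M}$ by hypothesis, this yields $S_n\mathcal{M}\subseteq\mathcal{M}$.

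For the forward direction, I would first identify $z^n\theta H^2(\D)$ as $S_n^n\mathcal{M}$.  The identity $S_n^{n+1}=M_z S_n^n$ from \eqref{eqn: Smf= Mzlm} shows $S_n^n\mathcal{M}$ is $M_z$-invariant, and Lemma \ref{lemma: S_n is shift}(ii) gives $S_n^n\mathcal{M}\subseteq z^n H^2(\D)$.  Beurling's theorem then yields $S_n^n\mathcal{M}=\widetilde\theta H^2(\D)$ for an inner $\widetilde\theta$, and the inclusion forces $\widetilde\theta=z^n\theta$ for some inner $\theta$.  Since $S_n$ is left invertible, $S_n^n|_{\mathcal{M}}$ is a bijection onto $z^n\theta H^2(\D)$, so there is a unique $g\in\mathcal{M}$ with $S_n^n g=z^n\theta$.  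Applying Lemma \ref{lemma: S_n is shift}(ii) to $g$ gives $S_n^n g=z^n(g+r)$ for a polynomial $r$, forcing $g=\theta-r$; setting $\varphi_j:=S_n^j g$ and using Lemma \ref{lemma: S_n is shift}(ii) again, $\varphi_j=z^j(g+r_j)=z^j\theta-(z^jr-z^jr_j)$, so $\varphi_j$ has the required form with $p_j=1$ and $q_j:=z^j(r-r_j)\in\C[z]$.  The relations $S_n\varphi_j=\varphi_{j+1}$ for $j<n-1$ and $S_n\varphi_{n-1}=z^n\theta$ are then automatic.

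To assemble the decomposition, set $\mathcal{M}_j:=S_n^j\mathcal{M}$.  Each $\mathcal{M}_j$ is closed and $S_n$-invariant, so Theorem \ref{prop: dimension 1} applied to $\mathcal{M}_j$ gives $\dim(\mathcal{M}_j\ominus\mathcal{M}_{j+1})=1$.  A short computation using $S_n^{n+1}=M_z S_n^n$ shows $g\notin S_n\mathcal{M}$: if $g=S_n h$ with $h\in\mathcal{M}$ and $S_n^n h=z^n\theta v$ for some $v\in H^2(\D)$, then $z^n\theta=S_n^{n+1}h=M_z(z^n\theta v)=z^{n+1}\theta v$, forcing $zv=1$ in $H^2(\D)$, which is impossible.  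Hence $\varphi_j=S_n^j g\notin\mathcal{M}_{j+1}$ for every $j$, and combined with the codimension-one containment $\mathcal{M}_{j+1}\subset\mathcal{M}_j$ this gives $\mathcal{M}_j=\C\varphi_j\oplus\mathcal{M}_{j+1}$.  Iterating down the chain $\mathcal{M}=\mathcal{M}_0\supset\mathcal{M}_1\supset\cdots\supset\mathcal{M}_n=z^n\theta H^2(\D)$ yields the claimed decomposition; directness of the whole sum follows from a minimal-index argument, since any nontrivial combination $\sum_{k\geq k_0}c_k\varphi_k$ with $c_{k_0}\neq 0$ lying in $\mathcal{M}_{k_0+1}$ would violate $\varphi_{k_0}\notin\mathcal{M}_{k_0+1}$.

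The main obstacle is matching the precise algebraic form $z^jp_j\theta-q_j$ rather than just producing any spanning set for $\mathcal{M}_j/\mathcal{M}_{j+1}$: any Wold-type orthogonal generators would decompose $\mathcal{M}$ but would not automatically be of the stated form.  The trick is to choose $g$ as the $S_n^n$-preimage of $z^n\theta$, which lets Lemma \ref{lemma: S_n is shift}(ii) deliver the $q_j$'s as genuine \emph{polynomial} errors, so that the iterates $\varphi_j=S_n^j g$ simultaneously inherit the algebraic form $z^j\theta-q_j$ and the telescoping action $S_n\varphi_j=\varphi_{j+1}$.
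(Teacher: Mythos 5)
Your backward direction and your identification $S_n^n\clm = z^n\theta H^2(\D)$ coincide with the paper's, but there is a genuine gap in the forward direction: the direct sums in the theorem are \emph{orthogonal} ones. In the paper's proof the generators are chosen as $\vp_i \in S_n^i\clm \ominus S_n^{i+1}\clm$, so that $\clm = (\C\vp_0 \oplus \cdots \oplus \C\vp_{n-1}) \oplus z^n\theta H^2(\D)$ is an orthogonal decomposition, whereas your vectors $\vp_j = S_n^j g$, with $g = (S_n^n|_{\clm})^{-1}(z^n\theta)$, yield only the algebraic direct sum $\clm_j = \C\vp_j \dotplus \clm_{j+1}$. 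This is not a cosmetic distinction, and your construction genuinely fails it: in the example of Corollary \ref{cor: cyclic} (so $n=1$, $a_0=b_0=1$, $Ff = f(0)z^2$ by \eqref{eqn: F f=}), take $\theta$ inner with $\theta(0)\neq 0$; your $g$ solves $S_1 g = z\theta$, which forces $g = \theta - \theta(0)z$, and then $\langle g, z\theta\rangle = -|\theta(0)|^2 \neq 0$, so $\C g + z\theta H^2(\D)$ is a direct but not orthogonal sum. The orthogonal generator is $\vp = (1+|\theta(0)|^2 z)\theta - \theta(0)z = g + |\theta(0)|^2 z\theta$, with $p = 1+|\theta(0)|^2 z$ rather than your $p\equiv 1$. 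The orthogonal form is also what the rest of the paper leans on (e.g., $\clm \ominus S_1\clm = \C\vp$ in Theorems \ref{thm: cyclic} and \ref{thm: S_1 wandering}).

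Your closing remark --- that Wold-type orthogonal generators ``would not automatically be of the stated form'' --- is exactly backwards, and correcting it is how the paper closes the gap you tried to engineer around. With $\vp_i \in S_n^i\clm \ominus S_n^{i+1}\clm$, one has $S_n^{n-i}\vp_i \in S_n^n\clm = z^n\theta H^2(\D)$, say $S_n^{n-i}\vp_i = z^n\theta h_i$ with $h_i \in H^2(\D)$ a priori only, while Lemma \ref{lemma: S_n is shift}(ii) gives $S_n^{n-i}\vp_i = z^{n-i}(\vp_i + q_i)$ with $q_i \in \C[z]$, whence $\vp_i + q_i = z^i\theta h_i$. Pairing with $z^{n+l}\theta$ and using precisely the orthogonality $\vp_i \perp z^n\theta H^2(\D)$ gives $\langle h_i, z^{n+l-i}\rangle = \langle q_i, z^{n+l}\theta\rangle$, which vanishes for all large $l$ since $q_i$ is a polynomial; hence $h_i =: p_i$ is a polynomial and $\vp_i = z^ip_i\theta - q_i$. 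So orthogonality is not an obstacle to the algebraic form --- it is what delivers the polynomiality of $p_i$. The rest of your scheme is sound (injectivity of $S_n^n|_{\clm}$ from left-invertibility, the argument that $g \notin S_n\clm$, codimension one from Theorem \ref{prop: dimension 1}), and it does show that in the weaker, non-orthogonal formulation one may always take $p_j = 1$; but as a proof of the stated theorem it cannot be repaired without reinstating the paper's orthogonal choice of the $\vp_i$.
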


\begin{proof}
Let $\clm$ be a nonzero closed subspace of $H^2(\D)$. Observe that
\[
S_n (z^nf) = (M_z+ F) (z^nf) = z^{n+1} f + F(z^nf) = z^{n+1} f,
\]
for all $f \in H^2(\D)$, where the last equality follows from Lemma \ref{lemma: S_n is shift}. Therefore
\begin{equation}\label{eqn:S M = M}
M_z^{m+n} = S_n^m M_z^n \qquad (m \geq 1).
\end{equation}
To prove the sufficient part, we see, by \eqref{eqn:S M = M}, that
\[
S_n(z^n\theta f)=z^{n+1}\theta f \in z^n\theta H^2(\D),
\]
for all $f\in H^2(\D)$, and hence $S_n (z^n\theta H^2(\D)) \subseteq z^n\theta H^2(\D)$. This and the remaining assumptions then implies that $S_n \clm \subseteq \clm$.

\NI For the converse direction, assume that $S_n \clm \subseteq \clm$. Theorem \ref{prop: dimension 1} then implies
\[
\clm = \mathbb{C}\vp_0\oplus S_n\clm,
\]
for some nonzero vector $\vp_0 \in \clm \ominus S_n \clm$.
Since $\clm$ is closed and $S_n$ is left invertible, it follows that $S_n \clm$ is also a nonzero closed $S_n$-invariant subspace of $H^2(\D)$. By Theorem \ref{prop: dimension 1} again, we have
\[
\clm= \mathbb{C}\vp_0 \oplus(\mathbb{C}\vp_1 \oplus S_n^2\clm),
\]
for some nonzero vector $\vp_1 \in S_n \clm \ominus S_n (S_n \clm)$. Continuing exactly in the same way, by induction, we find $\vp_i \in S_n^i\clm \ominus S_n^{i+1}\clm$, $i=0, 1, \ldots, n-1$, such that
\[
\clm = (\mathbb{C}\vp_0\oplus\mathbb{C}\vp_1\oplus \cdots \oplus \mathbb{C}\vp_{j-1})\oplus S_n^j \clm,
\]
for all $j=1, \ldots, n$. In particular, $\clm= (\mathbb{C}\vp_0\oplus\mathbb{C}\vp_1\oplus \cdots \oplus \mathbb{C}\vp_{n-1})\oplus S_n^n \clm$. Now, by \eqref{eqn: Smf= Mzlm}, we have $M_z(S^n_nf) = S_n^{n+1} f$, $f \in \clm$, which implies that $M_z(S_n^n \clm) \subseteq S_n^n \clm$, that is, $S_n^n \clm$ is a closed nonzero $M_z$-invariant subspace of $H^2(\D)$. By the Beurling theorem this implies that $S_n^n \clm = \tilde{\theta} H^2(\D)$ for some inner function $\tilde \theta \in H^\infty(\D)$. Since each element in $S_n^n \clm $ has a zero of order at least $n$ at $z = 0$ (see part (ii) of Lemma \ref{lemma: S_n is shift}), it follows that $\tilde \theta = z^n \theta$ for some inner function $\theta \in H^\infty(\D)$. Thus
\begin{equation}\label{eqn:SnM = zn theta H2D}
S_n^n \clm = z^n \theta H^2(\D),
\end{equation}
and hence
\[
\clm= (\mathbb{C}\vp_0\oplus\mathbb{C}\vp_1\oplus \cdots \oplus \mathbb{C}\vp_{n-1})\oplus z^n\theta H^2(\D),
\]
for some inner function $\theta \in H^\infty(\D)$. Fix an $i \in \{0, 1, \ldots, n-1\}$. Since $\vp_i \in S_n^i \clm \ominus S_n^{i+1}\clm$, by construction, we have $\vp_i \in S_n^i \clm$, and hence \eqref{eqn:SnM = zn theta H2D} implies
\[
S^{n-i} \vp_i \in S_n^n \clm = z^n \theta H^2(\D).
\]
Therefore, there exists $h_i \in H^2(\D)$ such that
\begin{equation}\label{eqn: Sn phi= z n theta h}
S_n^{n-i} \vp_i = z^n \theta h_i.
\end{equation}
By part (ii) of Lemma \ref{lemma: S_n is shift}, there exists a polynomial $q_i \in \C[z]$ such that $S_n^{n-i} \vp_i = z^{n-i}(\vp_i + q_i)$. Then
\begin{equation}\label{eqn: phi + q = z theta h}
\vp_i + q_i = z^i \theta h_i.
\end{equation}
Since $\vp_i \perp S_n^n \clm = z^n \theta H^2(\D)$, by construction, for each $l \geq 0$, we have
\[
\langle z^i \theta h_i, z^{n+l} \theta \rangle = \langle \vp_i + q_i, z^{n+l} \theta \rangle = \langle q_i, z^{n+l} \theta \rangle,
\]
which, along with $\langle z^i \theta h_i, z^{n+l} \theta \rangle = \langle h_i, z^{n+l - i} \rangle$, implies that
\[
\langle h_i, z^{n+l - i} \rangle = \langle q_i, z^{n+l} \theta \rangle.
\]
Finally, using the fact that $q_i$ is a polynomial, we conclude that for each $i=0,\ldots, n-1$, there exists a natural number $n_i$ such that $\langle h_i, z^t \rangle = 0$ for all $t \geq n_i$, and hence $p_i:= h_i$ is a polynomial. This completes the proof.
\end{proof}

From the final part of the above proof, we note that $h_i:=p_i$ is a polynomial. Therefore, by \eqref{eqn: Sn phi= z n theta h} and \eqref{eqn: phi + q = z theta h}, there exist polynomials $p_i, q_i \in \C[z]$ such that $\vp_i = z^i p_i \theta - q_i$, and
\begin{equation}\label{eqn: phi + q = z theta p}
S_n^{n-i} \vp_i = z^n p_i \theta \qquad (i=0, 1, \ldots, n-1).
\end{equation}

The description of invariant subspaces of $S_n$ as in the above theorem appears to be satisfactory and complete. However, a more detailed delicacy is hidden in the structure of polynomials $\{p_i, q_i\}_{i=0}^{n-1}$ and the finite rank operator $F$. In fact, without much control over these polynomials (and/or the finite rank operator $F$), hardly much can be said about the other basic properties of $n$-shift invariant subspaces. For instance:
\begin{center}
When an $n$-shift invariant subspace is cyclic?
\end{center}
Needless to say, the cyclicity property of shift operators is a complex problem. We will return to this issue in Section \ref{sec: examples}, and refer \cite{ALP, Sola et al} for some modern development of cyclic vectors of shift invariant subspaces of function Hilbert spaces.

\newsection{Commutants}\label{sec: commutant}

In this section, we compute commutants of $n$-shifts on analytic Hilbert spaces corresponding to truncated tridiagonal kernels. The concept of tridiagonal kernels or band kernels with bandwidth one in the context of analytic Hilbert spaces was introduced in \cite{Adam 2001, Paulsen 92}. Note that shifts on analytic Hilbert spaces corresponding to band kernels with bandwidth one are the next best examples of shifts after the weighted shifts.

The following definition is a variant of truncated tridiagonal kernels which is also motivated by a similar (but not exactly the same) concept of kernels in the context of Shimorin's analytic models \cite{DS}.

\begin{Definition}\label{def: trunc}
Let $\clh_k$ be an analytic Hilbert space corresponding to an analytic kernel $k: \D \times \D \raro \C$. We say that $\clh_k$ is a truncated space (and $k$ is a truncated kernel) if:

(i) $\C[z] \subseteq \clh_k$,

(ii) the shift $M_z$ is bounded on $\clh_k$, and

(iii) $\{f_m\}_{m \geq 0}$ forms an orthonormal basis of $\clh_k$, where $f_m = (a_m + b_mz) z^m$, $m \geq 0$, for some scalars $\{a_m\}_{m\geq 0}$ and $\{b_m\}_{m\geq 0}$ such that $a_s \neq 0$ for all $s \geq 0$, and $b_t = 0$ for all $t \geq n$.
\end{Definition}

Note that in the above definition, $n$ is a fixed natural number. Also, in this case, the kernel function $k$ is given by
\[
k(z, w) = \sum_{m=0}^{\infty} f_m(z) \overline{f_m(w)} \quad \quad (z, w \in \D).
\]
If, in addition, $\{|\frac{a_m}{a_{m+1}}|\}_{m\geq 0}$ is bounded away from zero, then $M_z$ on $\clh_k$ is left-invertible \cite[Theorem 3.5]{DS}. Clearly, the above representation of $k$ justifies the use of the term tridiagonal kernel.

\textsf{Throughout this section, we will assume that $a_m = 1$ for all $m \geq 0$}. Using the orthonormal basis $\{f_m = (1 + b_mz) z^m\}_{m\geq 0}$ of $\clh_k$, a simple calculation reveals that (cf. \cite[Section 3]{Adam 2001} or \cite[Section 2]{DS})
\begin{equation}\label{eqn: z^m}
z^m = \sum_{t=0}^{\infty} (-1)^t \Big(\prod_{j=0}^{t-1} b_{m+j} \Big) f_{m+t} \qquad (m \geq 0),
\end{equation}
where $\Pi_{j=0}^{-1} x_{m+j} :=1$. Since $b_m = 0$, $m \geq n$, we have $\prod_{j=0}^{t-1} b_{m+j} = 0$ for all $t \geq n+1$. In particular, the above is a finite sum. We set
\begin{equation}\label{eq: c_mp}
c_{m,p}=b_m-b_{m+p},
\end{equation}
for all $m \geq 0$ and $p\geq1$. Clearly, $c_{m,p} = 0$ for all $m \geq n$. Now $M_z f_m = z^{m+1} + b_m z^{m+2}$ implies that
\[
z f_m = f_{m+1} + (b_m - b_{m+1}) z^{m+2} = f_{m+1} + c_{m,1} z^{m+2},
\]
that is, $z f_m  = f_{m+1} + c_{m,1} z^{m+2}$ for all $m \geq 0$. Then \eqref{eqn: z^m} yields
\begin{equation}\label{eq: M_z}
z f_m  = f_{m+1} + c_{m,1} \sum_{t=0}^\infty (-1)^t \Big(\prod_{j=0}^{t-1} b_{m+2+j}\Big) f_{m+2+t} \quad \quad (m\geq 0).
\end{equation}
Since $c_{m,1} = 0$ for all $m \geq n$, as pointed out earlier, it follows that $z f_m = f_{m+1}$ for all $m \geq n$. In particular, the matrix representation of $M_z$ with respect to the orthonormal basis $\{f_m\}_{m \geq 0}$ is given by (also see \cite[Page 729]{Adam 2001})
\[
[M_z] =  \begin{bmatrix}
0& 0& 0& \dots &0 &0 & \dots\\

1& 0& 0& \dots &0 &0 & \dots\\

c_{0,1}& 1 & 0 &\dots & 0 &0 & \dots\\

-c_{0,1} b_2 & c_{1,1} & 1 &\dots& 0 & 0 & \dots\\

c_{0,1} b_2 b_3 & -c_{1,1} b_3 &c_{2,1} &\ddots& 0 & 0 & \dots\\

\vdots & \vdots & \vdots & \ddots & \ddots &\vdots &\vdots\\

0& 0& 0&\dots & c_{n-1,1}& 1 &\ddots\\

0& 0& 0& \dots & 0& 0 &\ddots\\

\vdots & \ddots & \ddots & \ddots & \ddots &\ddots &\ddots
\\
\end{bmatrix}.
\]
We define the \textit{canonical unitary map} $U: \clh_k\longrightarrow H^2(\D)$ by setting $U f_m = z^m$, $m\geq 0$. It then follows that
\begin{equation}\label{eqn:U map}
UM_z=S_n U,
\end{equation}
where $S_n: = M_z + F$ is the $n$-shift corresponding to the $n$-perturbation $F$ on $H^2(\D)$ whose matrix representation with respect to the orthonormal basis $\{z^m\}_{m \geq 0}$ of $H^2(\D)$ is given by
\[
[F] =  \begin{bmatrix}
0& 0& 0& \dots &0 &0 & \dots\\

0& 0& 0& \dots &0 &0 & \dots\\

c_{0,1}& 0 & 0 &\dots & 0 & 0& \dots\\

-c_{0,1} b_2 & c_{1,1} & 0 &\dots& 0 & 0 & \dots\\

c_{0,1} b_2 b_3 & -c_{1,1} b_3 &c_{2,1} &\dots& 0 & 0 & \dots\\

\vdots & \vdots & \vdots & \ddots & \ddots &\vdots &\vdots\\

0& 0& 0&\dots & c_{n-1,1}& 0 &\cdots\\

0& 0& 0& \dots & 0& 0 &\cdots\\

\vdots & \vdots & \vdots & \dots & \dots &\vdots &\vdots\\
\end{bmatrix},
\]

\begin{Definition}
We call $S_n$ the $n$-shift corresponding to the truncated kernel $k$.
\end{Definition}

Now we turn to the commutants of $n$-shifts corresponding to truncated kernels. Since $M_z$ on $\clh_k$ and $S_n$ on $H^2(\D)$ are unitarily equivalent, the problem of computing the commutant of $S_n$ reduces to that of $M_z$.

Let $\clh_k$ be a truncated space. Recall that a function $\vp: \D \raro \C$ is said to be a \textit{multiplier} of $\clh_k$ if $\vp \clh_k \subseteq \clh_k$ \cite{Aronszajn}. We denote by $\clm(\clh_k)$ the set of all multipliers. By the closed graph theorem, a multiplier $\vp \in \clm(\clh_k)$ defines a bounded linear operator $M_\vp$ on $\clh_k$, where
\[
M_\vp f = \vp f \qquad (f \in \clh_k).
\]
We call $M_\vp$ the \textit{multiplication operator} corresponding to $\vp$.

We will use the following notation: If $X \in \clb(\clh)$, then the commutant of $X$, denoted by $\{X\}'$, is the algebra of all operators $T \in \clb(\clh)$ such that $TX = XT$. In the following, we observe that $\{M_z\}' = \{M_{\vp}: \vp \in \clm(\clh_k)\}$. The proof is fairly standard:

\begin{Lemma}
Suppose $A \in \clb(\clh_k)$. Then $A M_z=M_z A$ if and only if there exists $\vp \in \clm(\clh_k)$ such that $A=M_{\vp}$.
\end{Lemma}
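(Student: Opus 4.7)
The sufficient direction is immediate: if $\vp \in \clm(\clh_k)$, then for every $f \in \clh_k$ we have $(M_\vp M_z)(f) = \vp \cdot z f = z \cdot \vp f = (M_z M_\vp)(f)$, so $M_\vp$ and $M_z$ commute.

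For the necessary direction, the plan is the familiar ``apply to the constant $1$'' trick, adapted to our setting. Since $\C[z] \subseteq \clh_k$, the constant function $1$ lies in $\clh_k$, so I can define
\[
\vp := A(1) \in \clh_k,
\]
which is in particular analytic on $\D$. The commutation relation $A M_z = M_z A$ iterates to $A M_z^k = M_z^k A$ for all $k \geq 0$, and therefore
\[
A(z^k) = A(M_z^k \, 1) = M_z^k A(1) = z^k \vp \qquad (k \geq 0).
\]
By linearity, $A(p) = \vp p$ for every polynomial $p \in \C[z]$.

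The next step is to pass from polynomials to all of $\clh_k$, for which I need that $\C[z]$ is dense in $\clh_k$. This follows from the structure of the given orthonormal basis $\{f_m = (1 + b_m z) z^m\}_{m \geq 0}$: for $m \geq n$ we have $b_m = 0$, hence $f_m = z^m$, and for $m < n$ each $f_m$ is a polynomial of degree at most $m+1$. Thus each $f_m$ lies in $\C[z]$, so the closed linear span of $\C[z]$ contains the closed linear span of $\{f_m\}_{m \geq 0}$, which is all of $\clh_k$.

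Given $f \in \clh_k$, pick polynomials $p_j \to f$ in $\clh_k$. Boundedness of $A$ gives $A(p_j) \to A(f)$ in $\clh_k$, that is $\vp p_j \to A(f)$ in norm. On the other hand, convergence in $\clh_k$ forces pointwise convergence on $\D$ via the reproducing property $\langle g, k(\cdot, w)\rangle = g(w)$, so $p_j(w) \to f(w)$ and consequently $\vp(w) p_j(w) \to \vp(w) f(w)$ for every $w \in \D$. Comparing the two limits forces $A(f) = \vp f$. In particular $\vp f \in \clh_k$ for every $f \in \clh_k$, so $\vp \in \clm(\clh_k)$ and $A = M_\vp$.

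The only mildly delicate step is the density of polynomials, since $\clh_k$ is an abstract reproducing kernel space; this is where the specific structure of the truncated tridiagonal basis (namely $f_m = z^m$ for $m \geq n$) is doing the work. Everything else is standard RKHS bookkeeping: the commutation relation with $M_z$ propagates to all powers, the candidate multiplier is $A(1)$, and the identification $A = M_\vp$ then follows by density plus the pointwise consequences of norm convergence in a reproducing kernel Hilbert space.
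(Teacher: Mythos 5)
Your proof is correct and follows essentially the same route as the paper's: set $\vp = A(1)$, propagate the commutation to all powers of $M_z$ so that $A$ acts as multiplication by $\vp$ on polynomials (equivalently on the basis vectors $f_m$, which are themselves polynomials), and extend to all of $\clh_k$ by density, using that norm convergence in a reproducing kernel Hilbert space implies pointwise convergence --- a limit-passing step the paper compresses into the single sentence ``since $\{f_m\}_{m\geq 0}$ is an orthonormal basis, we have $Af = \vp f$.'' One small remark: the truncation $b_m = 0$ for $m \geq n$ is not actually what makes $\C[z]$ dense --- each $f_m = (1 + b_m z)z^m$ is a polynomial regardless of truncation --- so your closing comment slightly overstates where the truncated structure does work; it is needed elsewhere in the paper (e.g.\ for the finite-rank structure of $N$), not here.
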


\begin{proof}
The ``if'' part is easy. To prove the ``only if'' part, suppose $A M_z = M_z A$ and let $A 1 = \vp$. Clearly, $\vp \in \clh_k$. Since $f_m = (1 + b_m z)z^m$, it follows that
\[
A f_m = A z^m + b_m A z^{m+1} = (z^m + b_m z^{m+1}) A 1 = f_m \vp = \vp f_m,
\]
for all $m \geq 0$. Since $\{f_m\}_{m \geq 0}$ is an orthonormal basis, we have $Af = \vp f$ for all $f \in \clh_k$, and hence, $\vp \clh_k \subseteq \clh_k$. This proves that $A = M_\vp$, and completes the proof of the lemma.
\end{proof}

Now we prove the main result of this section. It essentially says that $\clm(\clh_k) = H^\infty(\D)$, that is, $\{M_z\}' = \{M_{\vp}: \vp \in H^\infty(\D)\}$.

\begin{Theorem}\label{thm: multiplier TDS}
Let  $\vp: \D \raro \C$ be a function, and let $\clh_k$ be a truncated space with $\{f_m\}_{m \geq 0}$ as an orthonormal basis, where $f_m(z) = (1 + b_m z) z^m$, $m \geq 0$, and $b_t = 0$ for all $t \geq n$. Then $\vp \in \clm(\clh_k)$ if and only in $\vp \in H^\infty(\D)$.
\end{Theorem}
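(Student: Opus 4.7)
The plan is to show that $\clh_k$ and $H^2(\D)$ coincide as vector spaces (with equivalent norms) and then conclude that their multiplier algebras coincide. The point is that the orthonormal basis $\{f_m\}$ differs from the standard monomial basis $\{z^m\}$ of $H^2(\D)$ only in finitely many vectors, since $f_m = z^m$ for all $m \geq n$.

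First I would check the forward inclusion $\clh_k \subseteq H^2(\D)$ with continuous inclusion. Any $f \in \clh_k$ has the expansion $f = \sum_{m \geq 0} c_m f_m$ with $\sum |c_m|^2 < \infty$, and rewriting in the monomial basis gives $f = \sum_{m \geq 0} (c_m + c_{m-1} b_{m-1}) z^m$ (with the convention $c_{-1} = 0$). Since $\{b_m\}_{m \geq 0}$ is a bounded sequence (in fact eventually zero), the coefficient sequence $\{c_m + c_{m-1} b_{m-1}\}$ is in $\ell^2$ with norm controlled by $\|f\|_{\clh_k}$. Hence $f \in H^2(\D)$ and the inclusion map $\clh_k \hookrightarrow H^2(\D)$ is bounded.

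Next I would show the reverse inclusion by inverting the triangular relation. Given $g = \sum_{m \geq 0} d_m z^m \in H^2(\D)$, I want to produce $\{c_m\} \in \ell^2$ with $\sum c_m f_m = g$. The matching condition $d_m = c_m + c_{m-1} b_{m-1}$ is a first-order recursion in $c_m$, solved by $c_0 = d_0$ and
\[
c_m = \sum_{k=0}^{m} (-1)^{m-k} d_k \prod_{j=k}^{m-1} b_j \qquad (m \geq 1).
\]
The crucial observation is that for $m \geq n$ every product appearing in this formula contains some factor $b_j$ with $j \geq n$, hence vanishes, except for the empty product at $k = m$; so $c_m = d_m$ for all $m \geq n$. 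Consequently $\sum |c_m|^2 < \infty$, which shows $g \in \clh_k$ and also gives continuous inclusion $H^2(\D) \hookrightarrow \clh_k$. Therefore $\clh_k = H^2(\D)$ as topological vector spaces.

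Finally, once the underlying vector spaces agree, multiplier algebras agree, because $\varphi \in \clm(\clh_k)$ iff $\varphi f \in \clh_k = H^2(\D)$ for every $f \in \clh_k = H^2(\D)$, that is, iff $\varphi \in \clm(H^2(\D)) = H^\infty(\D)$ by the classical characterization of multipliers of the Hardy space. The main (and only) obstacle is the explicit computation establishing $c_m = d_m$ for $m \geq n$; this reduces to the combinatorial fact that $b_n = b_{n+1} = \cdots = 0$, so the triangular inversion eventually becomes trivial, and everything else is a routine norm-equivalence bookkeeping.
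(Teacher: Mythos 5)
Your proposal is correct, but it takes a genuinely different route from the paper's. The paper argues basis-by-basis: it expands $\vp f_m$ using the identity \eqref{eqn:z^p f_m}, assembles the formal matrix of $M_\vp$ in the basis $\{f_m\}_{m\geq 0}$, and splits it as $[M_\vp]=[\tilde{T}_{\vp}]+[N]$, where $[N]$ has at most $n$ nonzero columns and is therefore bounded once $\{\alpha_m\}\in \ell^2$ (which is extracted from $M_\vp f_n=\sum_{j}\alpha_j f_{n+j}$); conjugating by the canonical unitary $Uf_m=z^m$ identifies $\tilde{T}_{\vp}$ with the Toeplitz operator $T_\vp$ on $H^2(\D)$, whence $\vp\in H^\infty(\D)$, and conversely. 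You instead show that $\clh_k=H^2(\D)$ as sets of analytic functions with equivalent norms, by inverting the triangular change of basis $d_m=c_m+b_{m-1}c_{m-1}$ and noting that the inversion is eventually trivial because $b_t=0$ for $t\geq n$; since the multiplier property $\vp\clh_k\subseteq\clh_k$ depends only on the underlying set of functions (boundedness of $M_\vp$ being automatic from the closed graph theorem), $\clm(\clh_k)=\clm(H^2(\D))=H^\infty(\D)$. Your route is shorter and more conceptual as a proof of Theorem \ref{thm: multiplier TDS} alone. What it does not buy is the explicit decomposition $M_\vp=\tilde{T}_{\vp}+N$ with $N$ of the special form \eqref{eqn: matrix M1}, which is the real payload of the paper's computation: it is exactly what is quoted in Theorem \ref{thm: n shift commutator} to parameterize the commutant $\{S_n\}'$, and again in the hyperinvariance argument of Theorem \ref{thm: hyper}. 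So your argument proves the stated equivalence cleanly but would leave the subsequent sections without their main technical input.

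Two small repairs. First, your claim that $c_m=d_m$ for all $m\geq n$ has an off-by-one error: for $m=n$ the products $\prod_{j=k}^{n-1}b_j$ involve only indices $j\leq n-1$ and need not vanish, so $c_n$ equals $d_n$ plus a fixed linear combination of $d_0,\ldots,d_{n-1}$; the correct statement is $c_m=d_m$ for $m\geq n+1$, which is still all you need to conclude $\{c_m\}\in\ell^2$ and to bound $\sum_m |c_m|^2$ by a constant times $\sum_m |d_m|^2$. Second, you should add one sentence confirming that $\sum_m c_m f_m$ converges to $g$ as a function, not merely to some element of $\clh_k$: for $M\geq n$ the partial sum $\sum_{m=0}^{M}c_m f_m$ equals $\sum_{m=0}^{M}d_m z^m$ exactly (the boundary term $c_M b_M z^{M+1}$ vanishes since $b_M=0$), and norm convergence in a reproducing kernel Hilbert space implies pointwise convergence, so the limit is $g$. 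With these cosmetic fixes the proof is complete.
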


\begin{proof}
Recall from \eqref{eq: M_z} that
\[
z f_m  = f_{m+1} + c_{m,1} \sum_{t=0}^\infty (-1)^t \Big(\prod_{j=0}^{t-1} b_{m+2+j}\Big) f_{m+2+t} \quad \quad (m\geq 0).
\]
In general, for any $p \geq 1$, we have
\[
z^p f_m = (1+b_m z)z^{m+p} = f_{m+p}+(b_m - b_{m+p})z^{m+p+1}.
\]
Since $c_{m,p}=b_m-b_{m+p}$ for all $m \geq 0$ and $p \geq 1$ (see \eqref{eq: c_mp}), it follows that
\begin{equation}\label{eqn:z^p f_m}
z^p f_m = f_{m+p}+c_{m,p}(f_{m+p+1}-b_{m+p+1}f_{m+p+2}+b_{m+p+1}b_{m+p+2}f_{m+p+3}-\cdots).
\end{equation}
Note that $c_{m,p}=0$ for all $m\geq n$. Let $\vp \in \clh_k$, and suppose $\vp = \sum_{m=0}^{\infty} \alpha_m z^m$. Since $\vp f_0 = \sum_{m=0}^{\infty} (\alpha_m z^m f_0)$ and $f_0 = 1 + b_0 z$, \eqref{eqn:z^p f_m} implies
\[
\vp f_0 = \alpha_0 f_0 + \alpha_1 f_1 + (\alpha_2 + \beta_{1,0}) f_2 + \cdots + (\alpha_n + \beta_{n-1,0}) f_n + \sum_{t=n+1}^{\infty} (\alpha_{t} + c_{0, t-1} \alpha_{t-1}) f_{t},
\]
where
\[
\beta_{j,0} = \text{coefficient of } f_{j+1} - \alpha_{j+1} \qquad (j=1, \ldots, n-1).
\]
Observe that $\beta_{j,0}$ is a finite sum for each $j=1, \ldots, n-1$. Similarly, for each $0 \leq m < n$, we have
\[
\begin{split}
\vp f_m = & \alpha_0 f_m + \alpha_1 f_{m+1} + (\alpha_2 + \beta_{1,m}) f_{m+2} + \cdots + (\alpha_{n-m} + \beta_{n-m-1,m}) f_n
\\
& \quad + \sum_{t=n+1}^{\infty} (\alpha_{t-m} + c_{m, t-m-1} \alpha_{t-m-1}) f_{t},
\end{split}
\]
where, as before, we let
\[
\beta_{j,m} = \text{coefficient of } f_{j+m+1} - \alpha_{j+1} \qquad (j=1, \ldots, n-m-1).
\]
Finally, for each $m \geq n$, it is easy to see that
\[
\vp f_m = \sum_{j=0}^\infty \alpha_j f_{m+j}.
\]
Therefore, the formal matrix representation of the linear operator $M_\vp$ (which is not necessarily bounded yet) is given by the formal sum of matrix operators
\begin{equation}\label{phi Matrix}
[M_{\vp}] = [\tilde{T}_{\vp}] + [N],
\end{equation}
where
\begin{equation}\label{eqn: matrix T phi}
[\tilde{T}_{\vp}] = \begin{bmatrix}
\alpha_0 & 0 & 0  & 0  &\dots
\\
\alpha_1 & \alpha_0 & 0 & 0 &\ddots
\\
\alpha_2 & \alpha_1 & \alpha_0  & 0 &\ddots
\\
\vdots & \ddots & \ddots  &\ddots & \ddots
\end{bmatrix}
\end{equation}
and
\begin{equation}\label{eqn: matrix M1}
[N] =  \begin{bmatrix}
0& 0& 0&\dots& 0 &0 &0 &\dots
\\
0& 0& 0& \dots &0 &0 &0 &\ddots
\\
\beta_{1,0}& 0& 0& \dots &0 &0 &0 &\ddots
\\
\beta_{2,0} &  \beta_{1,1} &0 &\dots &0 & 0 & 0 &\ddots
\\
\vdots & \vdots & \vdots & \vdots & \vdots &\vdots &\vdots &\ddots
\\
\beta_{n-1,0} & \beta_{n-2,1} & \beta_{n-3,2} & \dots &0 &0 &0 &\ddots
\\
c_{0,n} \alpha_n & c_{1,n-1} \alpha_{n-1}& c_{2,n-2} \alpha_{n-2}&\dots & c_{n-1,1} \alpha_1& 0& 0&\ddots
\\
c_{0,n+1} \alpha_{n+1} & c_{1,n} \alpha_n & c_{2,n-1} \alpha_{n-1}&\dots & c_{n-1,2} \alpha_2& 0 &0 & \ddots
\\
c_{0,n+2} \alpha_{n+2} & c_{1,n+1} \alpha_{n+1}& c_{2,n} \alpha_n &\dots &c_{n-1,3} \alpha_3& 0 &0 &\ddots
\\	
\vdots & \ddots & \ddots & \ddots & \ddots &\ddots &\ddots & \ddots
\\
\end{bmatrix}.
\end{equation}
Now assume that $\vp \in \clm(\clh_k)$, that is, the multiplication operator $M_{\vp}$ is bounded on $\clh_k$. Since
\[
M_\vp f_n = \vp f_n = \sum_{j=0}^{\infty}\alpha_j f_{n+j},
\]
it follows that $\{\alpha_m\}_{m\geq 0}$ is square summable, and hence $[\tilde{T}_{\vp}]$ defines a linear (but not necessarily bounded yet) operator on $\clh_k$. Since the matrix operator $[N]$ has at most $n$ nonzero columns and
\[
\sum_{m=0}^{\infty}|\alpha_m|^2<\infty,
\]
it follows that $[N]$ is bounded on $\clh_k$. Therefore, by \eqref{phi Matrix}, $[\tilde{T}_{\vp}]$ defines a bounded linear operator $\tilde{T}_{\vp}$ on $\clh_k$. Then we find that the canonical unitary map $U:\clh_k \raro H^2(\D)$ defined by equation \eqref{eqn:U map} satisfies
\[
U \tilde{T}_{\vp}=T_{\vp}U,
\]
where $T_{\vp}$ denote the (bounded) Toeplitz operator on $H^2(\D)$ with symbol $\vp$. In particular, $\vp \in H^{\infty}(\D)$.
	
For the converse, we assume that $\vp = \sum_{m=0}^\infty \alpha_m z^m$ is in $H^{\infty}(\D)$. If we set $\tilde{T}_{\vp} = U^* T_{\vp} U$, then $\tilde{T}_{\vp}$ is a bounded linear operator on $\clh_k$, and the matrix representation of $\tilde{T}_{\vp}$ will be of the form \eqref{eqn: matrix T phi}. Finally, since $\sum_{m=0}^\infty |\alpha_m|^2 < \infty$, it follows that the matrix \eqref{eqn: matrix M1} defines a bounded linear operator on $\clh_k$. Therefore, $M_{\vp} = \tilde{T}_{\vp} + N$ is bounded on $\clh_k$, which completes the proof of the theorem. 	
\end{proof}

Of course, the inclusion $\clm(\clh_k) \subseteq H^\infty(\D)$ follows rather trivially from properties of kernel functions: Suppose $\vp \in \clm(\clh_k)$. By the reproducing property of kernel functions, we have $M_{\vp}^* k(\cdot, w) = \overline{\vp(w)} k(\cdot, w)$, which implies
\[
|\vp(w)| = \frac{1}{\|k(\cdot, w)\|} \|M_{\vp}^* k(\cdot, w)\| \leq \|M_{\vp}\| \qquad (w \in \D).
\]
In particular, $\vp \in H^\infty(\D)$ and $\|\vp\|_{\infty} \leq \|M_{\vp}\|$. Evidently, the content of the above theorem is different and proves much more than the standard inclusion $\clm(\clh_k) \subseteq H^\infty(\D)$. Also, note that we have proved more than what has been explicitly stated in the above theorem:

\begin{Theorem}\label{thm: n shift commutator}
Consider the $n$-shift $S_n$ corresponding to the truncated space $\clh_k$ defined as in Theorem \ref{thm: multiplier TDS}, and let $X \in \clb(H^2(\D))$. Then $X \in \{S_n\}'$ if and only if there exists $\vp \in H^\infty(\D)$ such that $X = T_\vp + N$, where $N$ is a matrix operator as in \eqref{eqn: matrix M1} with respect to $\{z^m\}_{m \geq 0}$.
\end{Theorem}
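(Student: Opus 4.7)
The plan is to reduce Theorem \ref{thm: n shift commutator} to the already-proven Theorem \ref{thm: multiplier TDS} via the canonical unitary $U : \clh_k \to H^2(\D)$ sending $f_m \mapsto z^m$. The intertwining relation \eqref{eqn:U map}, $U M_z = S_n U$, immediately converts $S_n$-commutants on $H^2(\D)$ into $M_z$-commutants on $\clh_k$: for $X \in \clb(H^2(\D))$, we have $X S_n = S_n X$ if and only if $(U^* X U) M_z = M_z (U^* X U)$. Combining this with the preceding lemma and Theorem \ref{thm: multiplier TDS}, the latter happens precisely when $U^* X U = M_\vp$ for some $\vp \in \clm(\clh_k) = H^\infty(\D)$. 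Hence $\{S_n\}' = U \{M_\vp : \vp \in H^\infty(\D)\} U^*$, and the task becomes identifying $U M_\vp U^*$ with $T_\vp + N$ of the asserted form.

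Here I would invoke the decomposition obtained inside the proof of Theorem \ref{thm: multiplier TDS}, namely equation \eqref{phi Matrix}:
\[
M_\vp = \tilde{T}_\vp + N,
\]
where $\tilde{T}_\vp$ was defined on $\clh_k$ by $\tilde{T}_\vp = U^* T_\vp U$ and $N$ is the bounded operator on $\clh_k$ whose matrix with respect to $\{f_m\}_{m\geq 0}$ is \eqref{eqn: matrix M1}. Conjugating by $U$ then gives
\[
U M_\vp U^* = U \tilde{T}_\vp U^* + U N U^* = T_\vp + U N U^*.
\]
Since $U f_m = z^m$ for all $m \geq 0$, the matrix of $U N U^*$ with respect to the orthonormal basis $\{z^m\}_{m \geq 0}$ coincides with the matrix of $N$ with respect to $\{f_m\}_{m \geq 0}$, which is exactly the shape \eqref{eqn: matrix M1}. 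This proves the ``only if'' direction.

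For the ``if'' direction, assume $X = T_\vp + N$ with $\vp \in H^\infty(\D)$ and $N$ as in \eqref{eqn: matrix M1}. Setting $\tilde{T}_\vp = U^* T_\vp U$ and letting $N' = U^* N U$ (which, by the same basis transfer, has matrix \eqref{eqn: matrix M1} with respect to $\{f_m\}_{m \geq 0}$), the bounded operator $\tilde{T}_\vp + N'$ on $\clh_k$ is exactly the $M_\vp$ constructed in Theorem \ref{thm: multiplier TDS}, and therefore commutes with $M_z$. Conjugating by $U$ and using \eqref{eqn:U map} yields $X S_n = S_n X$, completing the proof.

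The main step is really already done: the heart of the argument is the matrix decomposition \eqref{phi Matrix} together with Theorem \ref{thm: multiplier TDS}, and the remaining work is purely transport along the unitary $U$. The only point that deserves care is the bookkeeping in confirming that $U N U^*$ on $H^2(\D)$ retains the same matrix shape \eqref{eqn: matrix M1} in the basis $\{z^m\}_{m \geq 0}$; this is immediate from $U f_m = z^m$, but worth stating explicitly because the representation of $N$ is what gives the theorem its concrete content.
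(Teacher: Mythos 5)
Your proposal is correct and takes essentially the same route as the paper: the paper's own (one-line) proof rests precisely on the intertwining relation \eqref{eqn: U intertwine}, $U M_{\vp} = (T_{\vp} + N)U$, which is exactly your transport of the decomposition \eqref{phi Matrix} from the proof of Theorem \ref{thm: multiplier TDS} along the canonical unitary $U$. Your explicit bookkeeping in both directions --- that conjugation by $U$ preserves the matrix shape \eqref{eqn: matrix M1} because $U f_m = z^m$, and that for the converse the operator $U^*(T_\vp + N)U$ is the multiplier $M_\vp$ already shown to be bounded --- merely spells out what the paper leaves implicit.
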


The proof follows easily, once one observe that
\begin{equation}\label{eqn: U intertwine}
U M_{\vp} = (T_{\vp} + N) U,
\end{equation}
for all $\vp \in H^\infty(\D) = \clm(\clh_k)$, where $U: \clh_k \raro H^2(\D)$ is the canonical unitary as in \eqref{eqn:U map}.

The following observation is now standard: The $n$-shift $S_n$ as in Theorem \ref{thm: multiplier TDS} is irreducible. Indeed, if $\clm \subseteq \clh_k$ is a closed $M_z$-reducing subspace, then $P_{\clm} M_z =M_z P_{\clm}$ implies that $P_{\clm} = M_{\vp}$ for some $\vp \in \clm(\clh_k)$. By Theorem \ref{thm: multiplier TDS}, $\vp \in H^\infty(\D)$. Then $P_{\clm}^2 =P_{\clm}$ implies that $\vp^2 = \vp$ on $\D$, and we obtain $\vp \equiv 0$ or $1$. It now follows that $\clm = \{0\}$ or $\clh_k$.

Representations of commutants of $n$-shifts on even ``simple'' truncated spaces appear to be interesting and nontrivial. We will work out some concrete examples in Section \ref{sec: examples}.

\newsection{Hyperinvariant subspaces}\label{sec: Hyper}

We continue from where we left in Section \ref{sec: commutant}, and prove that invariant subspaces of $n$-shifts on truncated spaces are hyperinvariant. Recall that a closed subspace $\clm \subseteq \clh$ is called a \textit{hyperinvariant subspace} for $T \in \clb(\clh)$ if
\[
X \clm \subseteq \clm,
\]
for all $X \in \{T\}'$. We assume that $\clh_k$ is a truncated space corresponding to the orthonormal basis $\{f_m\}_{m \geq 0}$, where $f_m(z) = (1 + b_m z)z^m$, $m \geq 0$, and $\{b_m\}_{m\geq 0}$ are scalars such that $b_t = 0$ for all $t \geq n$. In this case, recall that $\clm(\clh_k) = H^\infty(\D)$ (see Theorem \ref{thm: multiplier TDS}), and the canonical unitary $U: \clh_k \raro H^2(\D)$ defined by equation \eqref{eqn:U map} satisfies
\[
U M_z = S_n U \text{ and } U M_\vp = (T_\vp + N)U,
\]
for all $\vp \in H^\infty(\D)$, where $N$ is the finite rank operator whose matrix representation with respect to the orthonormal basis $\{z^m\}_{m \geq 0}$ of $H^2(\D)$ is given by \eqref{eqn: matrix M1}.

We are now ready to solve the hyperinvariant subspace problem for $n$-shifts on truncated spaces.

\begin{Theorem}\label{thm: hyper}
Closed invariant subspaces of $n$-shifts on truncated spaces are hyperinvariant.
\end{Theorem}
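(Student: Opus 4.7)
The plan is to combine the parametrization of $\{S_n\}'$ given by Theorem \ref{thm: n shift commutator} with the structural description of $S_n$-invariant subspaces from Theorem \ref{thm: inv sub}, and then use injectivity of $S_n^n$ (a consequence of left-invertibility of $S_n$) to close the argument without ever touching the detailed polynomial data.

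First I would record a decisive feature of the operator $N$ appearing in \eqref{eqn: matrix M1}: its matrix has at most $n$ nonzero columns, so $Nz^m = 0$ for every $m \geq n$, and hence $N$ annihilates $z^n H^2(\D)$. Consequently, for any $X = T_\vp + N \in \{S_n\}'$ (as parametrized by Theorem \ref{thm: n shift commutator}) and any $h \in H^2(\D)$,
\[
X(z^n \theta h) = T_\vp(z^n \theta h) + N(z^n \theta h) = z^n \theta (\vp h),
\]
so $X\bigl(z^n \theta H^2(\D)\bigr) \subseteq z^n \theta H^2(\D)$.

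Next, given a nonzero closed $S_n$-invariant subspace $\clm$, Theorem \ref{thm: inv sub} writes
\[
\clm = \bigl(\C\vp_0 \oplus \cdots \oplus \C\vp_{n-1}\bigr) \oplus z^n \theta H^2(\D),
\]
with $\vp_i = z^i p_i \theta - q_i$ and, by \eqref{eqn: phi + q = z theta p}, $S_n^{n-i}\vp_i = z^n p_i \theta$. Since $X$ is bounded and $\clm$ is closed, it suffices to verify $X\vp_i \in \clm$ for each $i = 0, \ldots, n-1$. Using the commutation $XS_n^n = S_n^n X$ together with \eqref{eqn:S M = M}, I would compute
\[
S_n^n(X\vp_i) = X(S_n^n \vp_i) = X\bigl(S_n^i(z^n p_i \theta)\bigr) = X(z^{n+i} p_i \theta).
\]
Since $z^{n+i}p_i\theta \in z^n\theta H^2(\D)$, the previous paragraph yields $X(z^{n+i}p_i\theta) \in z^n\theta H^2(\D) = S_n^n \clm$ (using \eqref{eqn:SnM = zn theta H2D}), so $S_n^n(X\vp_i) = S_n^n m$ for some $m \in \clm$. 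Injectivity of $S_n^n$ then forces $X\vp_i = m \in \clm$, as desired.

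The main conceptual move, rather than any calculation, is the decision to apply $S_n^n$ to $X\vp_i$ and exploit the combined facts that $z^n\theta H^2(\D)$ both equals $S_n^n\clm$ and is $X$-invariant. Once that trick is in hand, no delicate bookkeeping of the polynomials $\{p_i, q_i\}$ or of the entries of $N$ is needed, and the argument closes in a few lines.
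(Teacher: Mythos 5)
Your proof is correct, and while it shares the same skeleton as the paper's proof (parametrize $\{S_n\}'$ via Theorem \ref{thm: n shift commutator}, decompose $\clm$ via Theorem \ref{thm: inv sub}, and treat the subspace $z^n\theta H^2(\D)$ exactly as the paper does, using that $N$ kills $z^nH^2(\D)$), your handling of the finite-dimensional part is genuinely different. The paper pulls $X = T_\vp + N$ back to the multiplication operator $M_\vp$ on $\clh_k$ through the canonical unitary $U$, expands $\vp = \sum_m \alpha_m z^m$, and concludes $X\vp_i = \sum_{m=0}^\infty \alpha_m S_n^m \vp_i \in \clm$ from $S_n$-invariance and closedness of $\clm$ --- an analytic argument that implicitly requires norm convergence of the partial sums $\sum_{m=0}^M \alpha_m z^m (U^*\vp_i)$ in $\clh_k$, a point the paper passes over lightly (one would justify it, e.g., by Ces\`{a}ro means or weak convergence). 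Your route instead uses only the commutation $XS_n^n = S_n^nX$, the identity $S_n^{n-i}\vp_i = z^np_i\theta$ of \eqref{eqn: phi + q = z theta p}, the $X$-invariance of $z^n\theta H^2(\D)$, and injectivity of $S_n^n$ (from left-invertibility), so it is purely algebraic and sidesteps the series-convergence issue entirely; that is a real advantage. Two mild caveats: first, your argument leans on the equality $S_n^n\clm = z^n\theta H^2(\D)$, which is equation \eqref{eqn:SnM = zn theta H2D} inside the \emph{proof} of Theorem \ref{thm: inv sub} rather than part of its statement --- this is legitimate, since for any invariant $\clm$ the proof constructs $\theta$ and the $\vp_i$ with exactly that property (and you do need the full equality, not just $S_n^n\clm \subseteq z^n\theta H^2(\D)$, to conclude $S_n^n(X\vp_i) \in S_n^n\clm$), but it should be cited as such; second, the paper's series argument has the small extra payoff, noted in the remark following its proof, of applying verbatim to any shift whose multiplier algebra is $H^\infty(\D)$, whereas your argument is tied to the structural data of Theorem \ref{thm: inv sub}. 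Within the stated setting of truncated spaces, both proofs are complete.
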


\begin{proof}
Let $M_z$ be an $n$-shift on a truncated space, and let $S_n$ be the corresponding $n$-shift on $H^2(\D)$. Suppose $\clm$ is a nonzero closed $S_n$-invariant subspace of $H^2(\D)$. By Theorem \ref{thm: inv sub}, there exist an inner function $\theta \in H^\infty(\D)$ and polynomials $\{p_i, q_i\}_{i=0}^{n-1}$ such that
\[
\clm = (\C \vp_0 \oplus \C \vp_1 \oplus \cdots \oplus \C \vp_{n-1}) \oplus z^n\theta H^2(\D),
\]
where $\vp_i = z^i p_i \theta - q_i$ for all $i=0, \ldots, n-1$, and $S_n \vp_j \in (\mathbb{C}\vp_{j+1} \oplus \cdots \mathbb{C}\vp_{n-1})\oplus z^n\theta H^2(\D)$ for all $j=0, \ldots, n-2$, and $S_n \vp_{n-1} = z^n p_{n-1} \theta$. In view of Theorem \ref{thm: n shift commutator}, we only need to prove that $(T_\vp + N) \vp_i \in \clm$ for all $i=0, 1, \ldots, n-1$, and $(T_\vp + N)z^n\theta H^2(\D) \subseteq z^n\theta H^2(\D)$ for all $\vp \in H^\infty(\D)$. To this end, let $\vp \in \clm(\clh_k) = H^\infty (\D)$, and suppose $\vp(z) = \sum_{m=0}^{\infty} \alpha_m z^m$. Then for each $i=0, 1, \ldots, n-1$, we have
\[
(T_\vp + N)\vp_i = U M_\vp U^* \vp_i = U(\vp  U^*\vp_i),
\]
and hence
\[
\begin{split}
(T_\vp + N)\vp_i & = U(\sum_{m=0}^{\infty} \alpha_m z^m U^* \vp_i)
\\
& = U(\sum_{m=0}^{\infty} \alpha_m M_z^m U^* \vp_i)
\\
& = \sum_{m=0}^{\infty} \alpha_m S_n^m \vp_i \in \clm
\end{split},
\]
as $\vp_i \in \clm$ and $S_n \clm \subseteq \clm$. Finally, if $f \in H^2(\D)$, then Lemma \ref{lemma: S_n is shift} implies
\[
(T_\vp + N) z^n\theta f = T_\vp (z^n\theta f) + 0 = z^n \theta \vp f \in z^n\theta H^2(\D),
\]
and hence, $(T_\vp + N)z^n\theta H^2(\D) \subseteq z^n\theta H^2(\D)$, which completes the proof.
\end{proof}

Now let $M_z$ be an $n$-shift, and let $\clm(\clh_k) = H^\infty(\D)$. In particular, $\{M_z\}' = H^\infty(\D)$. In this case, a similar argument as the above proof gives the same conclusion as  Theorem \ref{thm: hyper}. However, as is well known, explicit computation of $\clm(\clh_k)$ is a rather challenging problem.

\section{Examples}\label{sec: examples}

In this section, we examine Theorem \ref{thm: inv sub} from a more definite examples point of view. As we will see, these examples are instructive and bring out several analytic and geometric flavors, and points out additional complications to the theory of finite rank perturbations.  

Fix scalars $a_0$ and $b_0$ such that $0 < |b_0| \leq |a_0|$, and consider the $1$-shift $S_1 = M_z + F$ on $H^2(\D)$ corresponding to the $1$-perturbation
\begin{equation}\label{eqn: def of F}
F z^m = \begin{cases}
((a_0 - 1) + b_0 z) z & \mbox{if } m=0 \\
0 & \mbox{if } m \geq 1.
\end{cases}
\end{equation}
The fact that $S_1$ is a $1$-shift follows from the inherited tridiagonal structure of $S_1$. Indeed, $S_1$ is unitarily equivalent to the shift $M_z$ on the truncated space $\clh_k$ with orthonormal basis $\{f_m\}_{m\geq 0}$, where $f_m = (a_m + b_mz) z^m$, $m\geq 0$, and $a_t = 1$ and $b_t=0$ for all $t \geq 1$. Since
\[
\Big|\frac{a_m}{a_{m+1}}\Big| \geq \min \{|a_0|,1\} \qquad (m \geq 0) ,
\]
the sequence $\{|\frac{a_m}{a_{m+1}}|\}_{m \geq 0}$ is bounded away from zero, and hence, $M_z$ is left-invertible (see the discussion following Definition \ref{def: trunc}). Moreover, the canonical unitary  $U: \clh_k \raro H^2(\D)$ defined by equation \eqref{eqn:U map} satisfies the required intertwining property $U M_z=S_1 U$. Therefore, it follows that $S_1= M_z + F$ on $H^2(\D)$ is indeed a $1$-shift. We clearly have
\begin{equation}\label{eqn: F f=}
F f = f(0) ((a_0 - 1) + b_0z) z \qquad (f \in H^2(\D)).
\end{equation}
Now we observe three distinctive features of $S_1$: Note that the matrix representation of $S_1$ with respect to the orthonormal basis $\{z^m\}_{m \geq 0}$ of $H^2(\D)$ is given by
\[
[S_1] = [M_z + F] = \begin{bmatrix}
0 & 0 & 0  & 0 & \dots
\\
a_0 & 0 & 0 & 0 & \ddots
\\
b_0 & 1 & 0  & 0 & \ddots
\\
0 & 0 & 1 & 0 &  \ddots
\\
\vdots & \ddots & \ddots & \ddots & \ddots
\end{bmatrix}.
\]
Then, a simple computation yields that
\[
[S_1^* ,S_1] =
\begin{bmatrix}
|a_0|^2 + |b_0|^2 & \bar{b}_0 & 0  & 0 & \dots
\\
b_0 & 1 - |a_0|^2 & -a_0 \bar{b}_0 & 0 & \ddots
\\
0 & -\bar{a}_0 b_0 & -|b_0|^2  & 0& \ddots
\\
0 & 0 & 0 & 0 & \ddots
\\
\vdots & \ddots & \ddots&\ddots & \ddots
\end{bmatrix},
\]
is precisely a rank-$3$ operator. Indeed, the determinant of the $3 \times 3$ nonzero submatrix of $[S_1^* ,S_1]$ is given by
\[
(|a_0|^2+|b_0|^2)\Big(-(1-|a_0|^2) |b_0|^2 - |a_0|^2|b_0|^2 \Big)- |b_0|^4 = -|a_0|^2|b_0|^2 <0.
\]
This also implies that $[S_1^* ,S_1]$ is not a positive definite operator. Therefore:
\begin{enumerate}
\item $S_1$ is essentially normal, that is, $[S_1^*, S_1] = S_1^* S_1 - S_1 S_1^*$ is compact (in fact, here it is of finite rank).
\item $S_1$ is not hyponormal (and hence, not subnormal).
\item Invariant subspaces of $S_1$ are cyclic.
\end{enumerate}

The proof of the final assertion is the main content of the following two theorems:

\begin{Theorem}\label{thm: cyclic}
Let $a_0$ and $b_0$ be scalars such that $0 < |b_0| \leq |a_0|$. Suppose
\[
F z^m = \begin{cases}
((a_0 - 1) + b_0 z) z & \mbox{if } m=0 \\
0 & \mbox{if } m \geq 1,
\end{cases}
\]
and consider the $1$-shift $S_1 = M_z + F$ on $H^2(\D)$. Then a nonzero closed subspace $\clm \subseteq H^2(\D)$ is invariant under $S_1$ if and only if there exists an inner function $\theta \in H^\infty(\D)$ such that
\[
\clm= \mathbb{C}\vp \oplus z\theta H^2(\D),
\]
where
\[
\vp = \Big(1 + \frac{b_0}{a_0}|\theta(0)|^2z \Big) \theta - \frac{\theta(0)}{a_0} \Big((a_0-1) + b_0z \Big).
\]
Moreover, if $\clm$ is as above, then $\clm=[\vp]_{S_1}$.
\end{Theorem}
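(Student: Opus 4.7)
The plan is to specialize Theorem \ref{thm: inv sub} to $n=1$, use the rank-one action of $F$ given by \eqref{eqn: F f=} to pin down the polynomials $p$ and $q$ appearing there, and finally deduce cyclicity via Beurling's theorem. First, I would invoke Theorem \ref{thm: inv sub} to write any nonzero closed $S_1$-invariant $\clm$ in the form $\C\vp\oplus z\theta H^2(\D)$ with $\vp = p\theta - q$ and $S_1\vp = zp\theta$. Since $Ff = f(0)\bigl((a_0-1)+b_0 z\bigr)z$, the direct identity
\[
S_1\vp = z\vp + \vp(0)\bigl((a_0-1)+b_0 z\bigr)z
\]
must equal $z(\vp+q)$, and comparison forces $q(z) = \vp(0)\bigl((a_0-1)+b_0 z\bigr)$. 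In particular, $q$ has degree at most one, and evaluating $\vp = p\theta - q$ at $z=0$ yields the auxiliary relation $a_0\vp(0) = p(0)\theta(0)$.

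Next, I would exploit the orthogonality $\vp\perp z\theta H^2(\D)$, which holds because $\vp \in \clm\ominus S_1\clm$ and $S_1\clm = z\theta H^2(\D)$ by \eqref{eqn:SnM = zn theta H2D}. Expanding $\langle\vp,z^k\theta\rangle = \langle p,z^k\rangle - \langle q,z^k\theta\rangle = 0$ for each $k\geq 1$ and using the explicit $q$, a short calculation reveals that $p$ is itself of degree at most one, namely $p(z) = p(0) + b_0\vp(0)\overline{\theta(0)}\,z$. Combining this with $a_0\vp(0) = p(0)\theta(0)$ and normalizing $\vp(0) = \theta(0)/a_0$ produces precisely the displayed formula (the case $\theta(0)=0$ degenerates correctly to $\vp = \theta$). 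For the converse, reversing this calculation shows $S_1\vp = zp\theta\in z\theta H^2(\D)$, while $F$ annihilates $zH^2(\D)$, so $S_1$ preserves $z\theta H^2(\D)$ and $\clm$ is $S_1$-invariant.

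For cyclicity, \eqref{eqn: Smf= Mzlm} gives $S_1^k\vp = M_z^{k-1}(S_1\vp) = z^k p\theta$ for every $k\geq 1$, hence $[\vp]_{S_1} = \C\vp + [zp\theta]_{M_z}$. By Beurling's theorem, $[zp\theta]_{M_z} = \eta H^2(\D)$ where $\eta$ is the inner factor of $zp\theta$. The step I expect to require the most care is the identification of this inner factor: writing $p(z) = 1 + \lambda z$ with $\lambda = \tfrac{b_0}{a_0}|\theta(0)|^2$, the hypotheses $|b_0|\leq|a_0|$ and $|\theta(0)|\leq 1$ combine to give $|\lambda|\leq 1$, so $p$ has no zero in $\D$ and is therefore outer; consequently $\eta = z\theta$. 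Combined with $\vp\perp z\theta H^2(\D)$, this yields $[\vp]_{S_1} = \C\vp\oplus z\theta H^2(\D) = \clm$. Without the hypothesis $|b_0|\leq|a_0|$, $p$ could vanish inside $\D$ and $[\vp]_{S_1}$ would be a proper subspace of $\clm$.
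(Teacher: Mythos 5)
Your proposal is correct and follows essentially the same route as the paper: specialize Theorem \ref{thm: inv sub} to $n=1$, use $Ff = f(0)\bigl((a_0-1)+b_0z\bigr)z$ to force $q = \vp(0)\bigl((a_0-1)+b_0z\bigr)$, then exploit $\vp \perp z\theta H^2(\D)$ together with $a_0\vp(0)=p(0)\theta(0)$ to pin down $p = p(0)\bigl(1+\tfrac{b_0}{a_0}|\theta(0)|^2 z\bigr)$. Your cyclicity argument --- $S_1^k\vp = z^k p\theta$ for $k \geq 1$, $p$ outer because $|b_0|\leq|a_0|$ and $|\theta(0)|\leq 1$ keep its zero out of $\D$, hence $[zp\theta]_{M_z} = z\theta H^2(\D)$ by Beurling --- is precisely the content of the paper's Theorem \ref{thm: S_1 wandering}, which the paper invokes to finish, so the two proofs coincide in substance.
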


\begin{proof}
In view of Theorem \ref{thm: inv sub}, we only have to prove the necessary part. Suppose $\clm$ is a $S_1$-invariant closed subspace of $H^2(\D)$. Again, by Theorem \ref{thm: inv sub}, there exists inner function $\theta \in H^\infty(\D)$ such that $\clm= \mathbb{C}\vp\oplus z\theta H^2(\D)$, where $S_1 \vp = z p \theta$ and
\begin{equation}\label{eqn: phi = p + qtheta}
\vp = q + p \theta
\end{equation}
for some polynomials $p, q \in \C[z]$.  Since $S_1 \vp = z p \theta$, we have $z p \theta = (M_z + F) \vp$. Then \eqref{eqn: F f=} implies
\[
z p \theta = (M_z + F) \vp = z \vp + \vp(0) ((a_0 - 1) + b_0 z) z,
\]
that is, $p \theta = \vp + \vp(0) ((a_0 - 1) + b_0 z)$. Therefore,
\begin{equation}\label{eqn: phi = p theta + }
\vp = p \theta - \vp(0) ((a_0 - 1) + b_0 z),
\end{equation}
and by \eqref{eqn: phi = p + qtheta}, it follows that $q = - \vp(0) ((a_0 - 1) + b_0 z)$. Now, if $m \geq 1$, then $\vp \perp z^m \theta H^2(\D)$ implies that $\langle \vp, z^m \theta \rangle = 0$, and hence \eqref{eqn: phi = p theta + } yields
\[
\langle p, z^m \rangle = \langle p \theta, z^m \theta \rangle = \vp(0) \langle (a_0-1) + b_0 z, z^{m} \theta \rangle.
\]
Since $\vp(0) = \frac{p(0) \theta(0)}{a_0}$, by \eqref{eqn: phi = p theta + } again, it follows that
\[
\langle p, z^m \rangle = \begin{cases}
b_0 \frac{p(0) |\theta(0)|^2}{a_0} & \mbox{if } m=1 \\
0 & \mbox{if } m > 1.
\end{cases}
\]
Thus, we have
\[
p = p(0) (1 + \frac{b_0}{a_0} |\theta(0)|^2 z),
\]
which implies that (by recalling \eqref{eqn: phi = p theta + })
\[
\begin{split}
\vp & = p \theta - \vp(0) ((a_0 - 1) + b_0 z)
\\
& = p \theta - \frac{p(0)\theta(0)}{a_0} ((a_0 - 1) + b_0 z)
\\
& = p(0) \Big[(1 + \frac{b_0}{a_0} |\theta(0)|^2 z) \theta - \frac{\theta(0)}{a_0} ((a_0 -1) + b_0 z)\Big].
\end{split}
\]
Finally, since $\vp \neq 0$, without loss of generality, we may assume that $p(0) = 1$. This completes the proof of the first part. We also have
\[
p = 1 + \frac{b_0}{a_0} |\theta(0)|^2 z.
\]
Since $0 < |b_0| \leq |a_0|$ and $\theta$ is inner, it follows that $p$ is an outer polynomial. The remaining part of the statement is now a particular case of the following theorem.
\end{proof}

In the level of $S_1$-invariant subspaces, we have the following general classification:

\begin{Theorem}\label{thm: S_1 wandering}
Let $\clm \subseteq H^2(\D)$ be a nonzero closed $S_1$-invariant subspace. Then
\[
\clm = [\clm \ominus S_1 \clm]_{S_1},
\]
if and only if there exists an inner function $\theta \in H^\infty(\D)$ and an outer polynomial $p \in \C[z]$ such that $\clm \ominus S_1 \clm = \C \vp$ and $S_1 \vp = z p \theta$.
\end{Theorem}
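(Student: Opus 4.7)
The plan is to reduce the assertion to a clean Beurling-type argument about the inner factor of a single polynomial. By Theorem \ref{prop: dimension 1}, $\dim(\clm\ominus S_1\clm)=1$, so $\clm\ominus S_1\clm=\C\vp$ for some nonzero wandering vector $\vp$. Theorem \ref{thm: inv sub} applied with $n=1$ then supplies an inner $\theta\in H^\infty(\D)$ and polynomials $p,q\in\C[z]$ with $\vp=p\theta-q$, $S_1\vp=zp\theta$, and
\[
\clm=\C\vp\oplus z\theta H^2(\D).
\]
Note that $p\neq 0$, for otherwise $S_1\vp=0$ would force $\vp=0$ by left-invertibility of $S_1$. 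Under this normalization the content of the theorem becomes: $\clm=[\vp]_{S_1}$ if and only if $p$ is outer.

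First I would compute $[\vp]_{S_1}$ explicitly. Taking $n=1$ in \eqref{eqn: Smf= Mzlm} gives $S_1^m=M_z^{m-1}S_1$ for all $m\geq 2$, so
\[
S_1^m\vp = M_z^{m-1}(zp\theta) = z^m p\theta \qquad (m\geq 1),
\]
whence
\[
[\vp]_{S_1} \;=\; \C\vp + [zp\theta]_{M_z}.
\]
Next I would factor $p=p_i p_o$ into inner and outer parts (so $p_i$ is a finite Blaschke product and $p_o$ is an outer polynomial) and apply Beurling's theorem to $zp\theta=(zp_i\theta)\cdot p_o$ to obtain $[zp\theta]_{M_z}=zp_i\theta H^2(\D)$. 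Since $\vp\perp z\theta H^2(\D)\supseteq zp_i\theta H^2(\D)$, the sum above is orthogonal, yielding
\[
[\vp]_{S_1}=\C\vp\oplus zp_i\theta H^2(\D).
\]

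Comparing with $\clm=\C\vp\oplus z\theta H^2(\D)$, the map $\alpha\vp+g\mapsto g+zp_i\theta H^2(\D)$ descends to a canonical isomorphism
\[
\clm/[\vp]_{S_1}\;\cong\; z\theta H^2(\D)/zp_i\theta H^2(\D),
\]
whose dimension equals $\deg p_i$. Therefore $[\vp]_{S_1}=\clm$ precisely when $p_i$ is a unimodular constant, i.e., when $p$ is outer, which establishes both directions simultaneously.

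The main obstacle I anticipate is organizational rather than technical: one must keep clear the distinction between the inner part of the generator $zp\theta$ (which governs the Beurling closure $[zp\theta]_{M_z}$) and the inner function $\theta$ recorded in the direct-sum decomposition of $\clm$. Once the inner factor $p_i$ of $p$ is isolated and pushed through Beurling's theorem, the equivalence reduces to elementary Hilbert-space bookkeeping on top of the structure theorem already in hand.
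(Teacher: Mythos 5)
Your proposal is correct and takes essentially the same route as the paper: both exploit \eqref{eqn: Smf= Mzlm} to get $S_1^m \vp = z^m p \theta$ for $m \geq 1$, identify $[\vp]_{S_1} = \C \vp \oplus [z p \theta]_{M_z}$, and settle matters with Beurling's theorem applied to $z p \theta$. The only cosmetic difference is that you factor $p = p_i p_o$ and count the codimension $\deg p_i$ of $[\vp]_{S_1}$ in $\clm$ to obtain both implications at once, whereas the paper treats the two directions separately via the identity $[z p \theta]_{M_z} = z \theta [p]_{M_z}$ and the characterization of outer functions as cyclic vectors.
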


\begin{proof}
Let $\clm = \C \vp \oplus z \theta H^2(\D)$, where $\theta \in H^\infty(\D)$ is an inner function, $\vp = p \theta - q$, and $S_1 \vp = z p \theta$ for some $p,q \in \C[z]$ (see Theorem \ref{thm: inv sub}). Note that
\[
\clm \ominus S_1 \clm = \C \vp.
\]
Since $S_1 \vp = z p \theta$, by \eqref{eqn: Smf= Mzlm} we have
\[
S_1^m \vp = S_1^{m-1}(z p \theta) = M_z^{m-1} (z p \theta) = z^m p \theta,
\]
for all $m \geq 2$. Therefore
\begin{equation}\label{eqn: S_1 m vp}
S_1^m \vp = z^m p \theta \qquad (m \geq 1).
\end{equation}
Now suppose that $\clm = [\vp]_{S_1}$. The above equality then tells us that $[S_1 \vp]_{S_1} \subseteq z \theta H^2(\D)$. Since $\vp \perp z \theta H^2(\D)$, we have
\[
\clm = [\vp]_{S_1} = \C \vp \oplus z \theta H^2(\D) = \C \vp \oplus [S_1 \vp]_{S_1}.
\]
Clearly, we have $[S_1 \vp]_{S_1} = z \theta H^2(\D)$, where on the other hand
\[
[S_1 \vp] = [z p \theta]_{M_z} = z \theta [p]_{M_z},
\]
and hence $z \theta [p]_{M_z} = z \theta H^2(\D)$. But since $z \theta$ is an inner function, we have $[p]_{M_z} = H^2(\D)$, that is, $p$ is an outer polynomial. In the converse direction, since $p$ is outer, \eqref{eqn: S_1 m vp} implies that
\[
z \theta H^2(\D) = z \theta [p]_{M_z} = [S_1 \vp]_{M_z} = [S_1 \vp]_{S_1}.
\]
Therefore
\[
\clm = \C \vp \oplus z \theta H^2(\D) = \C \vp \oplus [S_1 \vp]_{S_1} = [\vp]_{S_1},
\]
which completes the proof of the theorem.
\end{proof}

In the setting of Theorem \ref{thm: cyclic}, we now consider the particular case when $a_0 = b_0 = 1$. In this case
\[
F z^m = \begin{cases}
z^2 & \mbox{if } m=0 \\
0 & \mbox{if } m \geq 1.
\end{cases}
\]
Then, by Theorem \ref{thm: cyclic}, we have:

\begin{Corollary}\label{cor: cyclic}
Let $F1 = z^2$ and $F z^m = 0$ for all $m \geq 1$. Suppose $\clm$ is a nonzero closed subspace of $H^2(\D)$. Then $\clm$ is invariant under $S_1 = M_z + F$ if and only if there exists an inner function $\theta \in H^\infty(\D)$ such that $\clm= \mathbb{C}\vp \oplus z\theta H^2(\D)$, where
\[
\vp = (1 + |\theta(0)|^2z) \theta - \theta(0)z.
\]
Moreover, if $\clm$ is as above, then $\clm=[\vp]_{S_1}$.
\end{Corollary}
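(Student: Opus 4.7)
The plan is to obtain this corollary as a direct specialization of Theorem~\ref{thm: cyclic} to the case $a_0 = b_0 = 1$. First I would check that the hypothesis $0 < |b_0| \leq |a_0|$ of Theorem~\ref{thm: cyclic} is satisfied under $a_0 = b_0 = 1$, and that the definition of $F$ in Theorem~\ref{thm: cyclic} reduces to the one in the corollary: substituting these values into $Fz^m = ((a_0 - 1) + b_0 z)z$ for $m=0$ gives $F 1 = z^2$, while $Fz^m = 0$ for $m \geq 1$ is unchanged. So the hypotheses match verbatim.

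Next I would substitute into the formula for $\vp$ from Theorem~\ref{thm: cyclic}:
\[
\vp = \Big(1 + \tfrac{b_0}{a_0}|\theta(0)|^2 z\Big)\theta - \tfrac{\theta(0)}{a_0}\big((a_0 - 1) + b_0 z\big),
\]
which collapses to $\vp = (1 + |\theta(0)|^2 z)\theta - \theta(0) z$, precisely as stated. The ``if and only if'' description of $\clm$ in the corollary is then immediate.

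For the cyclicity assertion $\clm = [\vp]_{S_1}$, Theorem~\ref{thm: cyclic} already provides this as a conclusion, so nothing further is needed. However, to see explicitly that the criterion of Theorem~\ref{thm: S_1 wandering} is met here, I would verify that the associated polynomial $p = 1 + |\theta(0)|^2 z$ is outer. If $\theta(0) = 0$, then $p \equiv 1$, which is trivially outer. Otherwise $p$ has its unique zero at $z = -1/|\theta(0)|^2$, and since $|\theta(0)| \leq 1$ for any inner function $\theta$ (by the maximum modulus principle), this zero satisfies $|z| \geq 1$, so it lies outside $\D$. Thus $p$ is outer, and Theorem~\ref{thm: S_1 wandering} yields $\clm = [\vp]_{S_1}$.

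There is essentially no obstacle here beyond bookkeeping: the corollary is a pure specialization. The only mild point worth pausing over is the outerness of $p = 1 + |\theta(0)|^2 z$, but this is nothing more than a restatement of the Schwarz-type bound $|\theta(0)| \leq 1$ for inner $\theta$.
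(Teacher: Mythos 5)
Your proposal is correct and follows exactly the paper's route: the paper derives Corollary~\ref{cor: cyclic} purely by specializing Theorem~\ref{thm: cyclic} to $a_0 = b_0 = 1$, just as you do. Your extra verification that $p = 1 + |\theta(0)|^2 z$ is outer (its only zero has modulus $1/|\theta(0)|^2 \geq 1$, hence lies off the open disc $\D$) is redundant for the corollary itself, since Theorem~\ref{thm: cyclic} already asserts $\clm = [\vp]_{S_1}$, but it is accurate and mirrors the outerness observation made inside the paper's proof of Theorem~\ref{thm: cyclic} before invoking Theorem~\ref{thm: S_1 wandering}.
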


Moreover, in the setting of Theorem \ref{thm: cyclic}, for $\clm=\mathbb{C}\vp\oplus z\theta H^2(\D)$, we have the following curious observations:

\begin{enumerate}
\item $\clm$ is of finite codimension if and only if $\theta$ is a finite Blaschke product (this is also true for general $n$-shift invariant subspaces in the setting of Theorem \ref{thm: inv sub}).
\item $\vp$ need not be an inner function. Indeed, in the setting of Corollary \ref{cor: cyclic}, consider the Blaschke factor $\theta(z)=\frac{\frac{1}{2}-z}{1-\frac{1}{2}z}$, and set $\vp = (1 + |\theta(0)|^2z) \theta - \theta(0)z$. Then $\vp(z)=\frac{1}{2} \frac{1-\frac{11}{4}z}{1-\frac{1}{2}z}$ is a rational function with $z = 2$ as the only pole. Note that $\vp(1) = - \frac{7}{4}$ and $\vp(-1) = \frac{5}{4}$. Clearly, $\vp$ is not an inner function.

\item If $\theta(0) = 0$, then $\clm = [\theta]_{M_z} = [\theta]_{S_1}$. Therefore, $S_1|_{\clm}$ is an unilateral shift of multiplicity one. On the other hand, if $\tilde{\theta}$ is an inner function with $\tilde\theta(0) \neq 0$, then $S_1|_{\clm}$ and $S_1|_{\tilde{\clm}}$ are not unitarily equivalent, where $\tilde{\clm} = \C \tilde{\vp} \oplus z \tilde{\theta} H^2(\D)$ and $\tilde{\vp} = (1 + \frac{b_0}{a_0}|\tilde{\theta}(0)|^2z) \tilde{\theta} - \frac{\tilde{\theta}(0)}{a_0} \Big((a_0-1) + b_0z \Big)$.
\end{enumerate}

The final observation is in sharp contrast with a well-known consequence of the Beurling theorem: If $\clm_1$ and $\clm_2$ are nonzero closed $M_z$-invariant subspaces of $H^2(\D)$, then $M_z|_{\clm_1}$ and $M_z|_{\clm_2}$ are unitarily equivalent. In view of (3) above, this property fails to hold for invariant subspaces of $n$-shifts.

We still continue with the setting of Theorem \ref{thm: cyclic}, and examine Theorem \ref{thm: n shift commutator} in the case of the commutators of $S_1$. In fact, we have the following observation: Let $X \in \clb(H^2(\D))$. Then $X \in \{S_1\}'$ if and only if there exists $\vp \in H^\infty(\D)$ such that $X = T_{\vp} + N$, where
\[
Nz^m =
\begin{cases}
z (\vp - \vp(0)) & \mbox{if } m=0
\\
0 & \mbox{otherwise}.
\end{cases}
\]
Indeed, in this case, $f_0(z)=1+z$ and $f_m(z) = z^m$ for all $m\geq 1$. Let $X \in \clb(H^2(\D))$, and let $X S_1 = S_1 X$. Set $\tilde{X} = U^* X U$. Then, $\tilde{X} \in \clb(\clh_k) \cap \{M_z\}'$, and, as in the proof of Theorem \ref{thm: n shift commutator}, there exist $\vp \in H^\infty(\D)$ such that $\tilde{X} = M_{\vp}$. Moreover, if $\vp = \sum_{m=0}^\infty \alpha_m z^m$, then
\[
M_{\vp} f_0 = \alpha_0 f_0 + \alpha_1 f_1 + \sum_{j=2}^\infty (\alpha_j + \alpha_{j-1}) f_j,
\]
and
\[
M_{\vp} f_m = \sum_{j=0}^\infty \alpha_j f_{m+j} \qquad (m \geq 1),
\]
which implies that
\[
[M_{\vp}] = \begin{bmatrix}
\alpha_0 & 0 & 0  & 0 & \cdots
\\
\alpha_1 & \alpha_0 & 0 & 0 & \ddots
\\
\alpha_2 + \alpha_1 & \alpha_1 & \alpha_0  & 0 & \ddots
\\
\alpha_3 + \alpha_2 & \alpha_2 & \alpha_1 & \alpha_0 & \ddots
\\
\vdots & \ddots & \ddots & \ddots & \ddots
\end{bmatrix}.
\]
Therefore, $[M_\vp] = [\tilde{T}_{\vp}] + [N]$, where
\[
[\tilde{T}_{\vp}] = \begin{bmatrix}
\alpha_0 & 0 & 0  & 0 & \cdots
\\
\alpha_1 & \alpha_0 & 0 & 0 & \ddots
\\
\alpha_2 & \alpha_1 & \alpha_0  & 0 & \ddots
\\
\alpha_3 & \alpha_2 & \alpha_1 & \alpha_0 & \ddots
\\
\vdots & \ddots & \ddots & \ddots & \ddots
\end{bmatrix}
\text{ and }
[N] = \begin{bmatrix}
0 & 0 & 0  & 0 & \cdots
\\
0 & 0 & 0 & 0 & \ddots
\\
\alpha_1 & 0 & 0  & 0 & \ddots
\\
\alpha_2 &0 & 0 & 0 & \ddots
\\
\vdots & \ddots & \ddots & \ddots & \ddots
\end{bmatrix}.
\]
By the proof of Theorem \ref{thm: n shift commutator}, $X = U \tilde{X} U^* = T_{\vp} + N$. Clearly, $N1 = \sum_{j=1}^\infty \alpha_j z^{j+1} = z ( \vp - \vp(0))$, and $N z^m = 0$ for all $m \geq 1$, which ends the proof of the claim.

In connection with Theorem \ref{thm: cyclic}, we now point out the other natural (but easier) example of $1$-shift $S_1 = M_z + F$, where
\[
F z^m = \begin{cases}
z & \mbox{if } m=0 \\
0 & \mbox{if } m \geq 1.
\end{cases}
\]
In this case, $S_1$ is a weighed shift with the weight sequence $\{2,1,1, \ldots\}$. Therefore, $S_1$ is similar to the unilateral shift $M_z$ on $H^2(\D)$ via an explicit similarity map. Using this, it is rather easy to deduce, by pulling back inner functions corresponding to $M_z$-invariant subspaces of $H^2(\D)$, that $S_1$-invariant subspaces are cyclic and of the form $\C \vp \oplus z \theta H^2(\D)$, with $\theta \in H^\infty(\D)$  inner and (after an appropriate scaling)
\[
\vp = \theta - \frac{1}{2} \theta(0).
\]
We refer to \cite{Nikolskii} for the theory of invariant subspaces of weighted shifts.

Finally, as far as the results of this present paper are concerned, $n$-shifts are more realistic shifts among shifts that are finite rank perturbations of the unilateral shift. However, a pressing question remains about the classification of invariant subspaces of general shifts that are finite rank perturbations of the unilateral shift.

\vspace{0.1in}

\noindent\textbf{Acknowledgement:}
The research of the second named author is supported in part by NBHM grant NBHM/R.P.64/2014, and the Mathematical Research Impact Centric Support (MATRICS) grant, File No: MTR/2017/000522 and Core Research Grant, File No: CRG/2019/000908, by the Science and Engineering Research Board (SERB), Department of Science \& Technology (DST), Government of India.

\bibliographystyle{amsplain}

\end{document}